\newcommand{\ignore}[1]{}
\newtheorem{definition}{Definition}
\newtheorem{theorem}{Theorem}
\newtheorem{remark}{Remark}
\newtheorem{lemma}{Lemma}
\newcommand{\z}{\mathsf{z}}
\newcommand{\n}{{\bf n}}
\newcommand{\0}{{\bf 0}}
\title[Lecture notes on tree-free regularity structures]{Lecture notes on\\ tree-free regularity structures}
\author[F.~Otto]{Felix Otto}
\author[K.~Seong]{Kihoon Seong}
\author[M.~Tempelmayr]{Markus Tempelmayr}
\renewenvironment{abstract}
{
\begin{center}
\begin{minipage}{.9\textwidth}\small\textbf{Abstract}.\noindent
}
{
\end{minipage}
\end{center}
}
\newenvironment{keyword}
{
\begin{center}
\begin{minipage}{.9\textwidth}\small\textbf{Keywords}:\noindent
}
{
\end{minipage}
\end{center}
}
\newenvironment{msc}
{
\begin{center}
\begin{minipage}{.9\textwidth}\small\textbf{MSC 2020}:\noindent
}
{
\end{minipage}
\end{center}
}
\begin{document}

\maketitle

\begin{abstract}
These lecture notes are intended as reader's digest of recent
work on a diagram-free approach to the renormalized centered model in
Hairer's regularity structures. More precisely, it is about
the stochastic estimates of the centered model, 
based on Malliavin calculus and a spectral gap assumption. We focus on
a specific parabolic partial differential equation in quasi-linear form
driven by (white) noise.

We follow a natural renormalization strategy based on preserving symmetries,
and carefully introduce Hairer's notion of a centered model, 
which provides the coefficients in a formal series expansion of a general solution.
We explain how the Malliavin derivative in conjunction with 
Hairer's re-expansion map allows to reformulate this definition
in a way that is stable under removing the small-scale regularization.

A few exemplary proofs are provided, 
both of analytic and of algebraic character. 
The working horse of the analytic arguments is an ``annealed'' Schauder estimate
and related Liouville principle, which is provided.
The algebra of formal power series, in variables that play the role
of coordinates of the solution manifold, and its algebra morphisms
are the key algebraic objects. 
\end{abstract}

\smallskip

\begin{keyword} 
Singular SPDE, 
Regularity Structures, 
BPHZ renormalization, 
Malliavin calculus, 
quasi-linear PDE.
\end{keyword}

\smallskip

\begin{msc} 
60H17, % Singular stochastic partial differential equations 
60L30, % Regularity structures
60H07, % Stochastic calculus of variations and the Malliavin calculus
81T16, % Nonperturbative methods of renormalization applied to problems in quantum field theory
% 16T05; % Hopf algebras and their applications
35K59.  % Quasilinear parabolic equations
\end{msc}

\tableofcontents
\newpage

\noindent
The theory of regularity structures by Hairer 
provides a systematic way to treat the small-scale divergences in 
singular semi-linear stochastic PDEs.
Quintessential models of mathematical physics like the
dynamical $\phi^4_3$ model or the KPZ equation have been treated. 
Inspired by Lyon's theory of rough paths, this theory separates probabilistic 
and analytical aspects: 
\begin{itemize}
\item Centered model.
In a first probabilistic step, the coefficients of a local formal power series
representation of a general solution of the renormalized PDE 
are constructed and estimated; 
the coefficients are indexed by (decorated) trees, 
and their stochastic estimate follows the diagrammatic approach 
to renormalization of quantum field theories.\\
\item Modelled distribution. 
In a second analytical step, inspired by Gubinelli's controlled rough path,
the solution of a specific initial value problem is found as a fixed point based
on modulating and truncating the formal power series . This step is
purely deterministic.
\end{itemize}
This automated two-pronged approach relies on an understanding of the algebraic nature
of the re-expansion maps that allow to pass from one base-point to another
in the local power series representation, in form of the ``structure group''. 
The main progress of regularity structures over
the term-by-term treatment in the mathematical physics literature is that thanks to
centering and re-expansion, the second step yields a rigorous (small data) well-posedness result. 
As an introductory text to the theory of regularity structures we recommend \cite{Hai16, CW17, BCZ23}. 

\medskip

\noindent
In \cite{OSSW}, motivated by the extension to a quasi-linear setting
featuring a general non-linearity $a(u)$, 
an alternative realization of
Hairer's regularity structures was proposed; it replaces
trees with a more greedy index set. 
This index set of multi-indices naturally comes up when 
writing a general solution $u$ as a functional of $a$, 
or rather as a function of the coefficients of $a$ in its power series expansion.
In \cite{OSSW} it was established 
that any solution of the renormalized PDE can be locally
approximated by a modelled distribution. This 
a-priori estimate was obtained under the assumption 
that the natural stochastic estimates on the centered model are available. 

%\medskip

In \cite{LOTT} this program was continued: 
Based on scaling and other symmetries,
a canonical renormalization of the PDE and its centered model was proposed,
and the centered model was stochastically constructed and estimated.
These notes present selected aspects of \cite{LOTT}, providing additional motivation. 
For a simpler setting where no renormalization and thus only purely
deterministic estimates are 
needed, we recommend to also have a look at\footnote{however,
the setting in \cite{LO} is different in the sense that it imposes an artificial space-time
periodicity: on the one hand, this allows to separate construction from estimation,
on the other hand, it obfuscates the quintessential scaling} \cite{LO}. 
Let us mention that the stochastic estimates obtained in \cite{LOTT} are analogous to the one of \cite{ch16} in the tree-based setting, 
however the assumption on the noise and the method of proof differ radically. 
The method presented here, based on Malliavin calculus and a spectral gap assumption, 
has been picked up in \cite{HS} and implemented in a general context in the tree-based setting. 
Also \cite{BB} used similar ideas to obtain stochastic estimates for the generalized KPZ equation, by iteratively applying the spectral gap inequality.

The algebraic aspects of the multi-index based regularity structures
are worked out in \cite{LOT}, 
where in line with Hairer's postulates
the underlying Hopf-algebraic nature of the structure group was uncovered. 
In fact, the Hopf algebra arises
from a Lie algebra generated by
natural actions on the space of non-linearities $a$ and solutions $u$. 
As an introduction to the algebraic aspects in the theory of regularity structures 
we recommend \cite{che}.

\medskip

\noindent
Other approaches to singular SPDEs include the theory of paracontrolled
distributions by Gubinelli, Imkeller, and Perkowski, we recommend \cite{GP} 
for a first reading, and the renormalization group flow approach 
introduced by Kupiainen and generalized by Duch; we recommend \cite{Kup} 
and \cite{Duc} for an introduction. The para-controlled calculus provides
an alternative to the separation into model and modelled distribution,
replacing localization in physical space-time by localization on the Fourier side;
it is (typically) also indexed by trees.
The flow approach blends the stochastic and the deterministic step of regularity structures,
and has an index set closer to multi-indices. While these alternative approaches might
be more efficient in specific situations, they presumably lack the full flexibility 
of the two-pronged approach of regularity structures with its conceptual clarity.

%%%%%%%%%%%%%%%%%%%%%%%%%%%%%%%%%%%%%%%%%%%%%%%%%%%%%%%%%%%%%%%%%%%%%%%%%%%%%%%%%%%%%%%
\section{A singular quasi-linear SPDE}

\noindent
We are interested in nonlinear elliptic or parabolic equations with a random and thus 
typically rough right hand side $\xi$. 
Our approach is guided by moving beyond the well-studied semi-linear case. 
We consider a mildly quasi-linear case
where the coefficients of the leading-order derivatives depend on the solution $u$
itself. To fix ideas, we focus on the parabolic case in a single space dimension;
since we treat the parabolic equation in the whole space-time like an anisotropic elliptic
equation, we denote by $x_1$ the space-like and by $x_2$ the time-like variable. Hence
we propose to consider
\begin{align}\label{ao22}
(\partial_2-\partial_1^2)u=a(u)\partial_1^2u+\xi,
\end{align}
where we think of the values of $a(u)$ to be such 
that the equation is parabolic.
We are interested in laws / ensembles of $\xi$ where the
solutions $v$ to the linear equation
\begin{align}\label{ao25}
(\partial_2-\partial_1^2)v=\xi
\end{align}
are (almost surely) H\"older continuous, where it will turn out to be convenient
to express this in the ``annealed'' form\footnote{Think of Brownian motion which satisfies
$\mathbb{E}^\frac{1}{2}(B(s)-B(t))^2$ $=|s-t|^\frac{1}{2}$ while not being H\"older
continuous of exponent $\frac{1}{2}$ almost surely. Following the jargon annealed/quenched 
from statistical mechanics models (which itself is borrowed from metallurgy), 
we speak of annealed norms when the inner norm is an $L^p$-norm
w.~r.~t.~probability $\mathbb{E}$ and the outer norm is a space-time one.} of 
\begin{align}\label{cw37}
\sup_{x\not=y}\frac{1}{|y-x|^\alpha}\mathbb{E}^\frac{1}{2}|v(y)-v(x)|^2<\infty
\end{align}
for some exponent $\alpha\in(0,1)$.
In view of the anisotropic nature of $\partial_2-\partial_1^2$ and its invariance
under the rescaling $x_1=s\hat x_1$ and $x_2=s^2\hat x_2$, 
H\"older continuity in (\ref{cw37}) is measured w.~r.~t.~the Carnot-Carath\'eodory distance
\begin{align}\label{ao79}
``|y-x|\textnormal{''}:=\sqrt[4]{(y_1-x_1)^4+(y_2-x_2)^2}
\sim|y_1-x_1|+|y_2-x_2|^\frac{1}{2}.
\end{align}
By Schauder theory for $\partial_2-\partial_1^2$, 
on which we shall expand on in Subsection \ref{sec:Schauder},
this is the case for white noise $\xi$ with $\alpha=\frac{1}{2}$.
The rationale is that white noise has order of regularity $-\frac{D}{2}$,
where $D$ is the effective dimension, which in case of (\ref{ao25}) 
is $D=1+2=3$ since in view of (\ref{ao79})
the time-like variable $x_2$ counts twice, and that
$(\partial_2-\partial_1^2)^{-1}$ increases regularity by two, 
leading to $-\frac{D}{2}+2=\frac{1}{2}$.

%\medskip

In the range of $\alpha\in(0,1)$, the SPDE (\ref{ao22}) is what is called ``singular'': 
We cannot expect that the order of regularity of $u$ and thus
$a(u)$ is better than the one of $v$, which is $\alpha$, and hence the order
of regularity of $\partial_1^2u$ is no better than $\alpha-2$.
Since $\alpha+(\alpha-2)<0$ for $\alpha<1$,
the product $a(u)\partial_1^2u$ cannot be classically/deterministically defined\footnote{It is a classical result that the multiplication extends naturally from $C^\alpha \times C^\beta$ into $\mathcal{D}'$ if and only if $\alpha+\beta>0$, see \cite[Section 2.6]{BCD}.}. 
As discussed at the end of Section \ref{sec:Schauder},
a renormalization is needed\footnote{
The range $\alpha>1$, while still subtle for $\alpha<2$, 
does not require a renormalization, see \cite{LO}.}.

%\medskip

The same feature occurs for the (semi-linear) multiplicative heat equation
$(\partial_2-\partial_1^2)u=a(u)\xi$; in fact, our approach also applies to this 
semi-linear case, which already has been treated by (standard) regularity structures
in \cite{hp15}.
A singular product is already present in the case when the $x_1$-dependence
is suppressed, so that the above semi-linear equation turns into the SDE 
$\frac{du}{dx_2}=a(u)\xi$ with white noise $\xi$ in the time-like variable $x_2$. 
In this case, the analogue of $v$ from (\ref{ao25}) is Brownian motion, which
is characterized by $\mathbb{E}(v(y_2)-v(x_2))^2=|y_2-x_2|$ and
thus annealed H\"older exponent $\frac{1}{2}$ in $x_2$, which
in view of (\ref{ao79}) corresponds to the border-line setting $\alpha=1$.
Ito's integral and, more recently, Lyons' rough paths \cite{lyons} 
and Gubinelli's controlled rough paths \cite{gub}
have been devised to tackle the issue in this SDE setting.

%%%%%%%%%%%%%%%%%%%%%%%%%%%%%%%%%%%%%%%%%%%%%%%%%%%%%%%%%%%%%%%%%%%%%%%%%%%%%%%%%
\section{Annealed Schauder theory}\label{sec:Schauder}

\noindent
This section provides the main (linear) PDE ingredient for our result.
At the same time, it will allow us to discuss (\ref{ao25}).

%\medskip

In view of (\ref{ao25}), we are interested in the fundamental solution of the differential
operator $A:=\partial_2-\partial_1^2$. It turns out to be convenient to use the
more symmetric\footnote{It is symmetric under reflection 
not just in space but also in time} 
fundamental solution of the non-negative $A^*A$ $=(-\partial_2-\partial_1^2)
(\partial_2-\partial_1^2)$ $=\partial_1^4-\partial_2^2$. Moreover, it will
be more transparent to ``disintegrate'' the latter fundamental solution,
by which we mean writing it as $\int_0^\infty dt\psi_t(z)$, 
where $\{\psi_t\}_{t>0}$ are the kernels of the semi-group $\exp(-tA^*A)$.
Clearly, the Fourier transform is given by 
\begin{align}
{\mathcal F}\psi_t(q)=\exp(-t(q_1^4+q_2^2))\overset{\eqref{ao79}}{=}\exp(-t|q|^4).
\label{ft}
\end{align}
In particular, $\psi_{t}$ is a Schwartz function. 
For a Schwartz distribution $f$ like realizations of white noise,
we thus define $f_t(y)$ as the pairing of $f$ with $\psi_t(y-\cdot)$;
$f_t$ is a smooth function.
On the level of these kernels, the semi-group property translates into
\begin{align}\label{ao36}
\psi_s*\psi_{t}=\psi_{s+t}\quad\mbox{and}\quad\int\psi_t=1.
\end{align}
By construction, $\{\psi_t\}_t$ satisfies the PDE
\begin{align}\label{ao80}
\partial_t\psi_t+(\partial_1^4-\partial_2^2)\psi_t=0.
\end{align}
By scale invariance of \eqref{ao80} under $x_1=s \hat x_1$,
$x_2=s^2\hat x_2$, and $t=s^4\hat t$, we have
\begin{align}\label{ao37}
\psi_t(x_1,x_2)=\frac{1}{(\sqrt[4]{t})^{D=3}}\,\psi_1(\frac{x_1}{\sqrt[4]{t}},
\frac{x_2}{(\sqrt[4]{t})^2}).
\end{align}

\medskip

\begin{lemma}\label{lem:int}
Let $0<\alpha\le\eta<\infty$ with $\eta\not\in\mathbb{Z}$, 
$p<\infty$, and $x\in\mathbb{R}^2$ be given.
For a random Schwartz distribution $f$ with
\begin{align}\label{ao76}
\mathbb{E}^\frac{1}{p}|f_t(y)|^p\le(\sqrt[4]{t})^{\alpha-2}(\sqrt[4]{t}+|y-x|)^{\eta-\alpha}
\quad\mbox{for all}\;t>0,y\in\mathbb{R}^2,
\end{align}
there exists a unique random function $u$ of the class 
\begin{align}\label{ao55}
\sup_{y\in\mathbb{R}^2}\frac{1}{|y-x|^\eta}\mathbb{E}^\frac{1}{p}|u(y)|^p<\infty 
\end{align}
satisfying (distributionally in $\mathbb{R}^2$)
\begin{align}\label{ao56}
(\partial_2-\partial_1^2)u=f \quad ({\rm mod} \mbox{ polynomial of degree } \le\eta-2 ).
\end{align}
This unique solution $u$ actually satisfies $(\partial_2-\partial_1^2)u=f$.
%It actually satisfies \eqref{ao56} without the polynomial.
Moreover, the l.~h.~s.~of \eqref{ao55} is bounded by a constant only depending on
$\alpha$ and $\eta$.
\end{lemma}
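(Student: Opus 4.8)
The plan is to construct $u$ by superposing the smoothed data $f_t$ against the semigroup kernels, exploiting the disintegration $\int_0^\infty dt\,\psi_t$ of the fundamental solution of $A^*A$. More precisely, I would set
\begin{align}\label{plan1}
u(y):=\int_0^\infty dt\,(A^*f_t)(y)
=-\int_0^\infty dt\,\big((\partial_2+\partial_1^2)f\big)_t(y),
\end{align}
interpreting the integrand via the semigroup: since $\psi_s*\psi_t=\psi_{s+t}$ and $\partial_t\psi_t=-(\partial_1^4-\partial_2^2)\psi_t=-A^*A\psi_t$, one has $A(A^*f_t)=-\partial_t f_t$, so that formally $Au=-\int_0^\infty\partial_t f_t\,dt=f_0-f_\infty$. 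The point is that \eqref{ao76} forces $f_t\to0$ as $t\to\infty$ (at rate $(\sqrt[4]{t})^{\eta-2}$, which decays since $\eta<$ whatever is needed — actually one only needs $\eta-2<0$ near a fixed scale and integrability is recovered by the base-point subtraction), and $f_t\to f$ distributionally as $t\to0$. However, the integral in \eqref{plan1} need not converge absolutely near $t=0$ when $\eta>0$, so I would instead define $u$ through a \emph{base-point-subtracted} kernel: replace $(A^*f_t)(y)$ by $(A^*f_t)(y)$ minus its Taylor polynomial of degree $\le\eta-2$ around $x$. This is exactly what the ``mod polynomial'' in \eqref{ao56} permits, and it is the mechanism by which the $(\sqrt[4]{t}+|y-x|)^{\eta-\alpha}$ factor in the hypothesis gets converted into the $|y-x|^\eta$ growth in \eqref{ao55}.

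The key estimates, carried out in order, would be: (i) \emph{Annealed kernel bounds.} From \eqref{ft}--\eqref{ao37}, derive $\mathbb{E}^{1/p}|(\partial^{\bf n}f_t)(y)|^p\lesssim(\sqrt[4]{t})^{\alpha-2-|{\bf n}|}(\sqrt[4]{t}+|y-x|)^{\eta-\alpha}$ for multi-indices ${\bf n}$, by writing $\partial^{\bf n}f_t=\partial^{\bf n}\psi_{t/2}*f_{t/2}$, using that $\partial^{\bf n}\psi_{t/2}$ is Schwartz with the scaling \eqref{ao37}, and applying Minkowski's inequality in probability together with \eqref{ao76} at scale $t/2$. (ii) \emph{Convergence of the subtracted integral.} Split $\int_0^\infty=\int_0^{|y-x|^4}+\int_{|y-x|^4}^\infty$. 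On the far piece, no subtraction is needed and the integrand is bounded by $(\sqrt[4]{t})^{\alpha-2-1}(\sqrt[4]{t})^{\eta-\alpha}=(\sqrt[4]{t})^{\eta-3}$ (the extra $-1$ from the single derivative in $A^*$), integrating to $\lesssim|y-x|^\eta$. On the near piece, bound the subtracted integrand by $|y-x|^{\lceil\eta-2\rceil+?}$ times a derivative of order $\lceil\eta-1\rceil$ of $A^*f_t$ evaluated on the segment $[x,y]$ — this costs $(\sqrt[4]{t})^{\alpha-2-1-\lceil\eta-1\rceil}(\sqrt[4]{t})^{\eta-\alpha}$, and since the exponent on $\sqrt[4]{t}$ is then $>-4$ (here $\eta\notin\mathbb{Z}$ is used to keep it strictly away from the critical integer value), the $dt$-integral converges and is again $\lesssim|y-x|^\eta$. (iii) \emph{Verification of the PDE.} Show $Au=f$ distributionally: pair $u$ with a test function, interchange with $\int_0^\infty dt$, use $A(A^*f_t)=-\partial_t f_t$ and the fundamental theorem of calculus in $t$, noting the boundary term at $t=\infty$ vanishes and at $t=0$ reproduces $f$ (the subtracted polynomial contributes only a polynomial of degree $\le\eta-2$ to $u$, hence nothing to $Au$ beyond what \eqref{ao56} allows — but in fact one checks it contributes exactly zero, giving the sharper $Au=f$).

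For \emph{uniqueness} (equivalently, the Liouville principle), suppose $u_1,u_2$ both lie in the class \eqref{ao55} and satisfy \eqref{ao56}; then $w:=u_1-u_2$ is a random distribution with $Aw=0$ mod polynomials of degree $\le\eta-2$ and $\mathbb{E}^{1/p}|w(y)|^p\lesssim|y-x|^\eta$. Apply the semigroup: $w_t$ is smooth and $\partial_t w_t=-A^*Aw_t$; since $Aw$ is a polynomial of controlled degree, $A^*Aw=A^*(Aw)$ is a polynomial of degree $\le\eta-3$, so each $\partial_t w_t$ is a fixed polynomial of bounded degree. Integrating, $w_t$ differs from $w_0=w$ by a polynomial, and the scaling \eqref{ao37} together with the growth bound on $w$ forces $w_t$ itself to be a polynomial of degree $\le\eta$; but $Aw_t$ must match, and iterating scale-by-scale (send $t\to\infty$ and rescale) the growth $|y-x|^\eta$ with $\eta\notin\mathbb{Z}$ rules out any nonzero polynomial, so $w\equiv0$. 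I expect step (ii) — the careful bookkeeping of how many derivatives to extract in the Taylor subtraction so that the $t$-integral converges at \emph{both} ends simultaneously, and seeing precisely where $\eta\notin\mathbb{Z}$ enters — to be the main obstacle; the construction and the PDE verification are comparatively mechanical once the kernel bounds in (i) are in hand.
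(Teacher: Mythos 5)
Your overall blueprint matches the paper: define $u$ via the disintegrated fundamental solution with a Taylor subtraction at the base point, establish the annealed kernel bound on $\partial^{\bf n}f_t$ via the semigroup and Minkowski, split the $t$-integral at $t=|y-x|^4$, and prove uniqueness by applying the semigroup and reading off the vanishing of high derivatives from the growth bound. Step (i) is correct and coincides with the paper's \eqref{ao77}.

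However, step (ii) contains a genuine error: you have the roles of the two pieces swapped. You claim ``on the far piece, no subtraction is needed'' with a resulting bound $(\sqrt[4]{t})^{\eta-3}$, and reserve the Taylor-remainder form for the near piece. Two things go wrong. First, $A^*=-\partial_2-\partial_1^2$ has parabolic order $2$, not $1$ (both $\partial_2$ and $\partial_1^2$ carry $|{\bf n}|=2$ in the scaling \eqref{ao26}), so the unsubtracted far-field integrand is actually $\lesssim(\sqrt[4]{t})^{\eta-4}$; since $(\eta-4)/4>-1$ for all $\eta>0$, the integral $\int^{\infty}dt$ of this diverges at $t=\infty$. (Even your stated exponent $\eta-3$ diverges, since $(\eta-3)/4>-1$ for $\eta>-1$.) The Taylor subtraction is precisely what rescues the \emph{far} field: the remainder is a combination of $(y-x)^{\bf n}\partial^{\bf n}A^*f_t$ with $|{\bf n}|>\eta$, giving $t$-exponent $(\eta-4-|{\bf n}|)/4<-1$. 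Second, the Taylor-remainder form fails in the \emph{near} field: there $|y-x|\ge\sqrt[4]{t}$ forces $(\sqrt[4]{t}+|y-x|)^{\eta-\alpha}\sim|y-x|^{\eta-\alpha}$, and the remainder with $|{\bf n}|>\eta$ gives $t$-exponent $(\alpha-4-|{\bf n}|)/4<(\alpha-4-\eta)/4\le-1$, i.e.\ divergence at $t=0$ whenever $\alpha<\eta$. The paper instead estimates the near-field piece by the crude triangle inequality, bounding $A^*f_t(y)$ and each polynomial term $\partial^{\bf n}A^*f_t(x)(y-x)^{\bf n}$ with $|{\bf n}|<\eta$ separately, which converges precisely because $\alpha>0$ and $|{\bf n}|<\eta$ (and $\eta\notin\mathbb{Z}$ excludes the borderline $|{\bf n}|=\eta$). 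In short: Taylor-remainder form for $\sqrt[4]{t}\ge|y-x|$, triangle inequality for $\sqrt[4]{t}\le|y-x|$ --- the reverse of what you wrote. Relatedly, you should subtract the Taylor polynomial of $A^*f_t$ of order $\le\eta$, not $\le\eta-2$ as you state; with order $\le\eta-2$ the far-field remainder only gains $t$-exponent $<-1/2$, which is still not integrable at infinity.

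Your uniqueness sketch captures the right idea (semigroup plus $t$-constancy of high derivatives plus decay as $t\to\infty$), though you again miscount the order: $A^*(A\bar u)$ is a polynomial of parabolic degree $\le\eta-4$, not $\le\eta-3$. The paper's version of this step is more carefully quantified (it tracks $\partial^{\bf n}\bar u_t$ for $|{\bf n}|>\eta$ explicitly), but your outline is in the right spirit.
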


%\medskip

Now white noise $\xi$ is an example of such a random Schwartz distribution:
Since $\xi_t(y)$ is a centered Gaussian, we have $\mathbb{E}^\frac{1}{p}|\xi_t(y)|^p$
$\lesssim_p\mathbb{E}^\frac{1}{2}(\xi_t(y))^2$. By using the characterizing property
of white noise in terms of its pairing with a test function $\zeta$
\begin{align}
\mathbb{E}(\xi,\zeta)^2=\int\zeta^2,
\label{white}
\end{align}
we have $\mathbb{E}^\frac{1}{2}(\xi_t(y))^2$
$=\big(\int\psi_t^2(y-\cdot)\big)^\frac{1}{2}$, which by scaling (\ref{ao37})
is equal to $(\sqrt[4]{t})^{-\frac{D}{2}}(\int\psi_1^2)^\frac{1}{2}$
$\sim(\sqrt[4]{t})^{-\frac{D}{2}}$.
This specifies the sense in which white noise $\xi$ 
has order of regularity $-\frac{D}{2}$.

%\medskip

Fixing a ``base point'' $x$, Lemma \ref{lem:int} thus constructs the solution
of (\ref{ao25}) distinguished by $v(x)=0$. Note that the output (\ref{ao55}) takes the form of
$\mathbb{E}^\frac{1}{p}|v(y)-v(x)|^p\lesssim_p|y-x|^\frac{1}{2}$, which extends
(\ref{cw37}) from $p=2$ to general $p$.
Hence Lemma \ref{lem:int} provides an annealed version of a Schauder estimate,
alongside a Liouville-type uniqueness result.

%\medskip
%%%%%%%%%%%%%%%%%%%%%%%%%%%%%%%%%%%%%%%%%%%%%%%%%%%%%%%%%%%%%%%%%%%%%%%%%%%%%%%%

\begin{proof}[Proof of Lemma \ref{lem:int}]
By construction, $\int_0^\infty dt(-\partial_2-\partial_1^2)\psi_t$ is the
fundamental solution of $\partial_2-\partial_1^2$, so that we take the
convolution of it with $f$. However, in order to obtain a convergent expression 
for $t\uparrow\infty$, we need to pass to a Taylor remainder:
\begin{align}\label{ao74}
u=\int_0^\infty dt({\rm id}-{\rm T}_x^\eta)(-\partial_2-\partial_1^2)f_t,
\end{align}
where ${\rm T}_x^\eta$ is the operation of taking the
Taylor polynomial of order $\le\eta$; as we shall argue the additional
Taylor polynomial does not affect the PDE. 

We claim that (\ref{ao74}) is well-defined and estimated as
\begin{align*}
\mathbb{E}^\frac{1}{p}|u(y)|^p\lesssim|y-x|^\eta.
\end{align*}
To this purpose, we first note that
\begin{align}\label{ao77}
\mathbb{E}^\frac{1}{p}|\partial^{\bf n}f_t(y)|^p\lesssim(\sqrt[4]{t})^{\alpha-2-|{\bf n}|}
(\sqrt[4]{t}+|y-x|)^{\eta-\alpha},
\end{align}
where
\begin{align}\label{ao26}
\partial^{\bf n}f:=\partial_1^{n_1}\partial_2^{n_2}f
\quad\mbox{and}\quad|{\bf n}|=n_1+2n_2.
\end{align}
Indeed, by the semi-group property (\ref{ao36}) we may write
$\partial^{\bf n}f_t(y)$ $=\int dz$ $\partial^{\bf n}\psi_\frac{t}{2}(y-z)$ $f_\frac{t}{2}(z)$,
so that
$\mathbb{E}^\frac{1}{p}|\partial^{\bf n}f_t(y)|^p$
$\le\int dz|\partial^{\bf n}\psi_\frac{t}{2}(y-z)|\mathbb{E}^\frac{1}{p}|f_\frac{t}{2}(z)|^p$.
Hence by (\ref{ao76}), (\ref{ao77}) follows from the kernel bound
$\int dz$ $|\partial^{\bf n}\psi_\frac{t}{2}(y-z)|$ $(\sqrt[4]{t}+|y-x|)^{\eta-\alpha}$
$\lesssim(\sqrt[4]{t})^{-|{\bf n}|}(\sqrt[4]{t}+|y-x|)^{\eta-\alpha}$, which itself
is a consequence of the scaling (\ref{ao37}) and the fact that $\psi_\frac{1}{2}$ 
is a Schwartz function.

%\medskip

Equipped with (\ref{ao77}), we now derive two estimates for the integrand of (\ref{ao74}),
namely for $\sqrt[4]{t}\ge|y-x|$ (``far field'') and for $\sqrt[4]{t}\le|y-x|$ (``near field'').
We write the Taylor remainder $({\rm id}-{\rm T}_x^\eta)(\partial_2+\partial_1^2)f_t(y)$
as a linear combination of\footnote{where $x^{\bf n}$ $:=x_1^{n_1}x_2^{n_2}$} 
$(y-x)^{\bf n}\partial^{\bf n}(\partial_2+\partial_1^2)f_t(z)$
with $|{\bf n}|>\eta$ and
at some point $z$ intermediate to $y$ and $x$. By (\ref{ao77}) such a term is estimated
by $|y-x|^{|{\bf n}|}(\sqrt[4]{t})^{\alpha-4-|{\bf n}|}(\sqrt[4]{t}+|y-x|)^{\eta-\alpha}$,
which in the far field is 
$\sim |y-x|^{|{\bf n}|}(\sqrt[4]{t})^{\eta-4-|{\bf n}|}$. Since the exponent on $t$ is $<-1$,
we obtain as desired
\begin{align*}
\mathbb{E}^\frac{1}{p}|\int_{|y-x|^4}^\infty dt
({\rm id}-{\rm T}_x^\eta)(\partial_2+\partial_1^2)f_t(y)|^p\lesssim |y-x|^{\eta}.
\end{align*}

%\medskip

For the near-field term, i.~e.~for $\sqrt[4]{t}\le|y-x|$, we proceed as 
follows:
\begin{align*}
\lefteqn{\mathbb{E}^\frac{1}{p}|({\rm id}-{\rm T}_x^\eta)(\partial_2+\partial_1^2)f_t(y)|^p}
\nonumber\\
&\le\mathbb{E}^\frac{1}{p}|(\partial_2+\partial_1^2)f_t(y)|^p
+\sum_{|{\bf n}|\le\eta}|y-x|^{|\bf n|}
\mathbb{E}^\frac{1}{p}|\partial^{\bf n}(\partial_2+\partial_1^2)f_t(x)|^p\nonumber\\
&\overset{\eqref{ao77}}{\lesssim}
(\sqrt[4]{t})^{\alpha-4}|y-x|^{\eta-\alpha}
+\sum_{|{\bf n}|\le\eta}|y-x|^{|{\bf n}|}(\sqrt[4]{t})^{\eta-4-|{\bf n}|}.
\end{align*}
Since $\eta$ is not an integer, the sum restricts to $|{\bf n}|<\eta$,
so that all exponents on $t$ are $>-1$. Hence we obtain as desired
\begin{align*}
\mathbb{E}^\frac{1}{p}|\int_0^{|y-x|^4}dt
({\rm id}-{\rm T}_x^\eta)(\partial_2+\partial_1^2)f_t(y)|^p
\lesssim|y-x|^\eta.
\end{align*}
It can be easily checked that \eqref{ao74} 
is indeed a solution of \eqref{ao56}, even without a polynomial. 
For a detailed proof we refer to \cite[Proposition~4.3]{LOTT}. 

%\medskip

We turn to the uniqueness of $u$ in the class \eqref{ao55} satisfying \eqref{ao56}. 
Given two such solutions $u_1,u_2$, we observe that $\bar{u}:=u_1-u_2$ 
satisfies \eqref{ao55} and \eqref{ao56} with $f=0$. 
In particular $\partial^\n(\partial_2-\partial_1^2)\bar{u}=0$ for $|\n|>\eta-2$, 
and thus from \eqref{ao80} we obtain $\partial_t\partial^\n\bar{u}_t=0$ 
provided $|\n|>\eta-4$. Thus, $\partial^\n \bar{u}_t$ is independent of $t>0$.
Moreover, \eqref{ao55} implies that $\mathbb{E}|\partial^\n\bar{u}_t|\to 0$ as 
$t\to\infty$ for $|\n|>\eta$. 
Hence we learn from $t\to0$ that $\partial^\n\bar{u}=0$ for $|\n|>\eta$, 
i.e. $\bar{u}$ is a polynomial of degree $\leq\eta$. 
Since $\eta\not\in\mathbb{Z}$ this strengthens to $\bar{u}$ is a polynomial of degree $<\eta$, 
and by \eqref{ao55} it vanishes at $x$ to order $\eta$ which yields the desired $\bar{u}=0$.
\end{proof}

%\bigskip

We return to the discussion of the singular product $a(u)\partial_1^2u$, 
in its simplest form of
\begin{align*}
v\partial_1^2v=\partial_1^2\frac{1}{2}v^2-(\partial_1v)^2.
\end{align*}
While in view of Lemma \ref{lem:int} the first r.~h.~s.~term 
is well-defined as a random Schwartz distribution,
we now argue that the second term diverges. 
Indeed, applying $\partial_1$ to 
the representation formula (\ref{ao74}), so that the constant Taylor term drops out, we have 
\begin{align}\label{ao30}
\partial_1 v=\int_0^\infty dt \partial_1(-\partial_2-\partial_1^2)\xi_t.
\end{align}
Hence for space-time white noise 
\begin{align*}
&\mathbb{E}(\partial_1 v(x))^2 \\ 
&\overset{\eqref{ao30}}{=}\int_0^\infty dt \int_0^\infty ds \, \mathbb{E}\Big(\partial_1(-\partial_2-\partial_1^2)\xi_t(x) \partial_1(-\partial_2-\partial_1^2)\xi_s(x)  \Big) \\
&\overset{\eqref{white}}{=}\int_0^\infty dt \int_0^\infty ds \int_{\mathbb{R}^2}dy \, 
\partial_1(-\partial_2-\partial_1^2)\psi_t(x-y) 
\partial_1(-\partial_2-\partial_1^2)\psi_s(x-y) \\
&\overset{\eqref{ao36}}{=}\int_0^\infty dt \int_0^\infty ds \, 
\partial_1^2(\partial_2^2-\partial_1^4) \psi_{s+t}(0) \\
&\overset{\eqref{ao37}}{\sim} \int_0^\infty dt \int_0^\infty ds\,
\sqrt[4]{t+s}^{-D-6}. 
\end{align*}
Note that since $\frac{1}{4}(-D-6)<-2$ for $D=3$, the double integral diverges.
This divergence arises from $t\downarrow 0$ and $s\downarrow 0$, that is, from small space-time scales, and thus is called an ultra-violet (UV) divergence. A quick fix is to introduce an UV cut-off, which for instance can be implemented
by mollifying $\xi$. Using the semi-group convolution $\xi_\tau$ specifies the
UV cut-off scale to be of the order of $\sqrt[4]{\tau}$. It is easy to check
that in this case
\begin{align*}
\mathbb{E}(\partial_1v(x))^2\sim \int_\tau^\infty dt \int_\tau^\infty ds\,
\sqrt[4]{t+s}^{-D-6}\sim(\sqrt[4]{\tau})^{-1}.
\end{align*}

%\medskip

The goal is to modify the equation (\ref{ao22}) by ``counter terms'' such that
\begin{itemize}
\item the solution manifold stays under control 
as the ultra-violet cut-off $\tau\downarrow 0$, 
\item invariances of the solution manifold are preserved i.e. the solution manifold keeps as many symmetries as possible. 
\end{itemize}
In view of the above
discussion, we expect the coefficients of the counter terms to diverge as the 
cut-off tends to zero.
 
%%%%%%%%%%%%%%%%%%%%%%%%%%%%%%%%%%%%%%%%%%%%%%%%%%%%%%%%%%%%%%%%%%%%%%%%%%%%%%%%%%%%

\section{Symmetry-motivated postulates on the counter terms}\label{sec:post}

\noindent
In view of $\alpha\in(0,1)$, $u$ is a function while we think of all derivatives $\partial^{\bf n}u$
as being only Schwartz distributions. Hence it is natural to start from the very general
Ansatz that the counter term is
a polynomial in $\{\partial^{\bf n}u\}_{{\bf n}\not={\bf 0}}$ 
with coefficients that are general (local) functions in $u$:
\begin{align}\label{ao23}
(\partial_2-\partial_1^2)u+\sum_{\beta}h_{\beta}(u)\prod_{{\bf n}\neq {\bf 0}}
(\frac{1}{{\bf n}!}\partial^{\bf n}u)^{\beta({\bf n})}
=a(u)\partial_1^2u+\xi,
\end{align}
where $\beta$ runs over all multi-indices\footnote{which associate to every
index ${\bf n}$ a $\beta({\bf n})\in\mathbb{N}_0$ such that $\beta({\bf n})$ vanishes
for all but finitely many ${\bf n}$'s} in ${\bf n}\not={\bf 0}$ and 
${\bf n}!:=(n_1!)(n_2!)$.
For simplicity of this heuristic discussion, 
we drop the regularization on $\xi$ and don't index the counter term with $\tau$.

%\medskip

Only counter terms that have an order strictly below the order of the leading 
$\partial_2-\partial_1^2$ are desirable,
so that one postulates that the sum in (\ref{ao23})
restricts to those multi-indices for which
\begin{align}\label{cw14}
|\beta|_p:=\sum_{{\bf n}\not={\bf 0}}|{\bf n}|\beta({\bf n})<2
\quad\mbox{where}\quad|{\bf n}|:=n_1+2n_2.
\end{align}
This leaves only $\beta=0$ and $\beta=e_{(1,0)}$, where the latter means $\beta({\bf n})=\delta_{\bf n}^{(1,0)}$, so that (\ref{ao23}) collapses to
\begin{align}\label{ao24}
(\partial_2-\partial_1^2)u+h(u)+h'(u)\partial_1u=a(u)\partial_1^2u+\xi.
\end{align}
One also postulates that $h$ and $h'$ depend on the noise $\xi$ only through its
law / distribution / ensemble, hence are deterministic. 
Since we assume that the law is invariant under space-time
translation, i.~e.~is stationary,
it was natural to postulate that $h$ and $h'$ do not explicitly depend on $x$,
hence are homogeneous.

\medskip

\noindent 
{\sc Reflection symmetry}. Let us now assume that the law of $\xi$ is invariant under
\begin{equation}\label{ao30bis}
\begin{split}
\textnormal{space-time translation } y &\mapsto y+x, \\
\textnormal{space reflection } y &\mapsto (-y_1,y_2).
\end{split}
\end{equation}
We now argue that under this assumption, it is natural to postulate that the term 
$h'(u)\partial_1u$ in (\ref{ao24}) is not present, so that we are left with
\begin{align}\label{ao27}
(\partial_2-\partial_1^2)u+h(u)=a(u)\partial_1^2u+\xi.
\end{align}
To this purpose, let 
$x\in\mathbb{R}^2$ be arbitrary yet fixed, and consider the reflection at the line $\{y_1=x_1\}$
given by $R y=(2x_1-y_1,y_2)$, which by pull back acts on functions as $\tilde u(y):= u(Ry)$. 
Since (\ref{ao22})
features no explicit $y$-dependence, and only involves even powers of $\partial_1$,
which like $\partial_2$ commute with $R$, we have
\begin{align}\label{ao29}
(u,\xi)\;\mbox{satisfies (\ref{ao22})}\quad\Longrightarrow\quad
(u(R\cdot),\xi(R\cdot))\;\mbox{satisfies (\ref{ao22})}.
\end{align}
Since we postulated that $h$ and $h'$ depend on $\xi$ only
via its law, and since in view of the assumption (\ref{ao30bis}),
$\tilde\xi=\xi(R\cdot)$ has the same law as $\xi$, it is natural to postulate
that the symmetry (\ref{ao29}) extends from (\ref{ao22}) to (\ref{ao24}). Spelled out,
this means that (\ref{ao24}) implies
\begin{align*}
(\partial_2-\partial_1^2)\tilde u+h(\tilde u)+h'(\tilde u)\partial_1\tilde u
=a(\tilde u)\partial_1^2\tilde u+\tilde\xi.
\end{align*}
Evaluating both identities at $y=x$, and taking the difference, we get for any
solution of (\ref{ao24}) that $h'(u(x))\partial_1u(x)$ $=h'(u(x))(-\partial_1u(x))$,
and thus $h'(u(x))\partial_1u(x)=0$, as desired.

\medskip

\noindent
{\sc Covariance under $u$-shift}.
We now come to our most crucial postulate, which restricts how the nonlinearity
$h$ depends on the nonlinearity / constitutive law $a$. Hence we no longer
think of a single nonlinearity $a$, but consider all non-linearities at once, 
in the spirit of rough paths.
This point of view reveals another invariance of (\ref{ao22}), namely 
for any shift $v\in\mathbb{R}$ 
\begin{align}\label{ao32}
(u,a)\;\mbox{satisfies (\ref{ao22})}\quad\Longrightarrow\quad
(u-v,a(\cdot+v))\;\mbox{satisfies (\ref{ao22})}.
\end{align}
A priori, $h$ is a function of
the $u$-variable that has a functional dependence on $a$, as denoted by $h=h[a](u)$.
We postulate that the symmetry (\ref{ao32}) extends from (\ref{ao22}) to (\ref{ao27}).
This is the case provided we have the following shift-covariance property
\begin{align}\label{ao04}
h[a](u+v)=h[a(\cdot+v)](u)\quad\mbox{for all}\;u\in\mathbb{R}.
\end{align}
This property can also be paraphrased as: Whatever algorithm one uses to construct $h$
from $a$,
it should not depend on the choice of origin in what is just an affine space $\mathbb{R}\ni u$.
Property (\ref{ao04}) implies that the counter term is determined by a functional $c=c[a]$
on the space of nonlinearities $a$:
\begin{align}\label{ao09}
h[a](v)=c[a(\cdot+v)].
\end{align}
Renormalization now amounts to choosing $c$ such that the solution manifold
stays under control as the UV regularization of $\xi$ fades away.

%%%%%%%%%%%%%%%%%%%%%%%%%%%%%%%%%%%%%%%%%%%%%%%%%%%%%%%%%%%%%%%%%%%%%%%%%%%%%%%%%

\section{Algebrizing the counter term}\label{ss:3.1}

\noindent
In this section, we algebrize the relationship between $a$ and
the counter term $h$ given by a functional $c$ as in (\ref{ao09}). 
To this purpose, we introduce the following coordinates\footnote{
where here and in the sequel $k\ge 0$ stands short for $k\in\mathbb{N}_0$} on the space
of analytic functions $a$ of the variable $u$:
\begin{align}\label{ao11}
\mathsf{z}_k[a]:=\frac{1}{k!}\frac{d^ka}{du^k}(0)\quad\mbox{for}\;k\ge 0.
\end{align}
These are made such that by Taylor's theorem
\begin{align}\label{ao02}
a(u)=\sum_{k\ge 0}u^k\mathsf{z}_k[a]\quad\mbox{for}\;a\in\mathbb{R}[u],
\end{align}
where $\mathbb{R}[u]$ denotes the algebra of polynomials in the single variable
$u$ with coefficients in $\mathbb{R}$. 

%\medskip

We momentarily specify to functionals $c$
on the space of analytic $a$'s that can be represented as polynomials in the (infinitely many)
variables $\{\mathsf{z}_k\}_{k\ge 0}$. 
This leads us to consider the algebra $\mathbb{R}[\mathsf{z}_k]$
of polynomials in the variables $\mathsf{z}_k$ with coefficients in $\mathbb{R}$. 
The monomials 
\begin{align}\label{ao14}
\mathsf{z}^\beta:=\prod_{k\ge 0}\mathsf{z}_k^{\beta(k)}
\end{align}
form a basis of this (infinite dimensional)
linear space, where $\beta$ runs over all multi-indices\footnote{which
means they associate a frequency $\beta(k)\in\mathbb{N}_0$ to every $k\ge 0$
such that all but finitely many $\beta(k)$'s vanish}. Hence as a linear space,
$\mathbb{R}[\mathsf{z}_k]$ can be seen as the direct sum over the index set given
by all multi-indices $\beta$, and we think of $c$ as being of the form
\begin{align}\label{ao16}
c[a]=\sum_{\beta}c_\beta\mathsf{z}^\beta[a]
\quad\mbox{for}\;c\in\mathbb{R}[\mathsf{z}_k].
\end{align}

\medskip

\noindent
{\sc Infinitesimal $u$-shift}. Given a shift $v\in\mathbb{R}$, for $\tilde u:=u-v$
and $\tilde a:=a(\cdot+v)$ we have $\tilde a(\tilde u)=a(u)$.
This leads us to study the mapping $a\mapsto a(\cdot+v)$ which 
provides an action/representation of the group $\mathbb{R}\ni v$
on the set $\mathbb{R}[u]\ni a$.
Note that for 
$c\in\mathbb{R}[\mathsf{z}_k]$ and $a\in\mathbb{R}[u]$,
the function $\mathbb{R}\ni v\mapsto c[a(\cdot+v)]=\sum_{\beta}c_\beta\prod_{k\ge 0}
(\frac{1}{k!}\frac{d^ka}{du}(v))^{\beta(k)}$ is polynomial. Thus 
\begin{align}\label{ao06}
(D^{({\bf 0})}c)[a]=\frac{d}{dv}_{|v=0}c[a(\cdot+v)]
\end{align}
is well-defined, linear in $c$ and even a derivation\footnote{the index $({\bf 0})$ is 
not necessary for these lecture notes, 
since we do not appeal to the other derivations $\{D^{({\bf n})}\}_{{\bf n}\not={\bf 0}}$ 
from \cite{LOT,LOTT}, we keep it here for consistency with these papers}, 
meaning that Leibniz' rule holds
\begin{align}\label{ao15}
(D^{({\bf 0})}cc')=(D^{({\bf 0})}c)c'+c(D^{({\bf 0})}c').
\end{align}
The latter implies that $D^{({\bf 0})}$ is determined by its value on the
coordinates $\mathsf{z}_k$, which by definitions (\ref{ao11}) and (\ref{ao06})
is given by $D^{({\bf 0})}\mathsf{z}_k$ $=(k+1)\mathsf{z}_{k+1}$. Hence $D^{({\bf 0})}$ has
to agree with the following derivation on the algebra $\mathbb{R}[\mathsf{z}_k]$
\begin{align}\label{ao13}
D^{({\bf 0})}=\sum_{k\ge 0}(k+1)\mathsf{z}_{k+1}\partial_{\mathsf{z}_k},
\end{align}
which is well defined since the sum is effectively finite when applied to a monomial.

\medskip

\noindent
{\sc Representation of counter term}. 
Iterating (\ref{ao06}) we obtain by induction in $l\ge 0$
for $c\in\mathbb{R}[\mathsf{z}_k]$ and $a\in\mathbb{R}[u]$
\begin{align*}
\frac{d^l}{dv^l}_{|v=0}c[a(\cdot+v)]=((D^{({\bf 0})})^lc)[a]
\end{align*}
and thus by Taylor's theorem (recall that $v\mapsto c[a(\cdot+v)]$ is polynomial)
\begin{align}\label{ao07}
c[a(\cdot+v)]=\big(\sum_{l\ge 0}\frac{1}{l!}v^l(D^{({\bf 0})})^lc\big)[a].
\end{align}
We combine (\ref{ao07}) with (\ref{ao09}) to obtain the representation
\begin{align}\label{cw11}
h[a](v)=\big(\sum_{l\ge 0}\frac{1}{l!}v^l(D^{({\bf 0})})^lc\big)[a].
\end{align}
Hence our goal is to determine the coefficients $\{c_\beta\}_{\beta}$
in (\ref{ao16}), which typically will blow up as $\tau\downarrow 0$.

%
%%%%%%%%%%%%%%%%%%%%%%%%%%%%%%%%%%%%%%%%%%%%%%%%%%%%%%%%%%%%%%%%%%%%%%%%%%%%%%%%%%%%%%%%

\section{Algebrizing the solution manifold: The centered model}\label{ss:3.2}

\noindent
The purpose of this section is to motivate the notion of a centered model;
the motivation will be in parts informal.

\medskip

\noindent
{\sc Parameterization of the solution manifold}.
If $a\equiv 0$ it follows from (\ref{ao04}) that $h$ is a (deterministic) constant.
We learned from the discussion after Lemma \ref{lem:int} that -- given a base point $x$ --
there is a distinguished solution $v$ (with $v(x)=0$). Hence we may {\it canonically}
parameterize a general solution $u$ of (\ref{ao27}) for $a\equiv 0$ via $u=v+p$, by space-time functions $p$
with $(\partial_2-\partial_1^2)p=0$. Such $p$ are necessarily analytic. 
Having realized this, it is convenient\footnote{otherwise, the coordinates $\mathsf{z}_{(2,0)}$
and $\mathsf{z}_{(0,1)}$ defined in (\ref{ao48}) would be redundant on $p$-space} 
to free oneself from the constraint 
$(\partial_2-\partial_1^2)p=0$, which can be done at the expense of relaxing
(\ref{ao27}) to
\begin{align}\label{ao43}
(\partial_2-\partial_1^2)v=\xi\quad ({\rm mod}\mbox{ analytic space-time functions}).
\end{align}
Since we think of $\xi$ as being rough while analytic functions are infinitely smooth, 
this relaxation is still constraining $v$.

%\medskip

The implicit function theorem suggests that this parameterization (locally) persists 
in the presence of a sufficiently small analytic nonlinearity $a$: The nonlinear
manifold of all space-time functions $u$ that satisfy
\begin{align}\label{ao45}
(\partial_2-\partial_1^2)u+h(u)=a(u)\partial_1^2u+\xi \ 
({\rm mod}\mbox{ analytic space-time functions})
\end{align}
is still parameterized by space-time analytic functions $p$. We now return to the
point of view of Section \ref{sec:post} 
of considering all nonlinearities $a$ at once, meaning that
we consider the (still nonlinear) space of all space-time functions that satisfy
(\ref{ao45}) for {\it some} analytic nonlinearity $a$. We want to
capitalize on the symmetry (\ref{ao32}), which extends from (\ref{ao22}) to
(\ref{ao27}) and to (\ref{ao45}). 
We do so by considering the above space of $u$'s {\it modulo constants},
which we implement by focusing on increments $u-u(x)$. 
Summing up, it is reasonable to expect that the space of all space-time functions
$u$, modulo space-time constants, that satisfy (\ref{ao45}) for some analytic nonlinearity $a$
%and space-time function $q$ 
(but at fixed $\xi$), 
is parameterized by pairs $(a,p)$ with $p(x)=0$.

\medskip

\noindent
{\sc Formal series representation}. 
In line with the term-by-term approach from physics, we write the increment
$u(y)-u(x)$ as a (typically divergent) power series
\begin{align}\label{ao83}
\lefteqn{u(y)-u(x)}\nonumber\\
&=\sum_{\beta}\Pi_{x\beta}(y)
\prod_{k\ge 0}\big(\frac{1}{k!}\frac{d^ka}{du^k}(u(x))\big)^{\beta(k)}
\prod_{{\bf n}\not={\bf 0}}\big(\frac{1}{{\bf n}!}\partial^{\bf n}p(x)\big)^{\beta({\bf n})},
\end{align}
where $\beta$ runs over all multi-indices in $k\ge 0$ and ${\bf n}\not={\bf 0}$.
Introducing coordinates on the space
of analytic space-time functions $p$ with $p(0)=0$ via\footnote{where here and in the
sequel ${\bf n}\not={\bf 0}$ stands short for ${\bf n}\in\mathbb{N}_0^2-\{(0,0)\}$}
\begin{align}\label{ao48}
\mathsf{z}_{\bf n}[p]=\frac{1}{{\bf n}!}\partial^{\bf n}p(0)\quad
\mbox{for}\;{\bf n}\not={\bf 0},
\end{align}
(\ref{ao83}) can be more compactly written as
\begin{align}\label{ao01}
u(y)=
u(x)+\sum_{\beta}\Pi_{x\beta}(y)\mathsf{z}^\beta[a(\cdot+u(x)),p(\cdot+x)-p(x)].
\end{align}
This is reminiscent of Butcher series in the analysis of ODE discretizations.
 
%\medskip

Recall from above that for $a\equiv 0$ we have the explicit parameterization
\begin{align}\label{cw13}
u[a=0,p]-u[a=0,p](x)=v+p
\end{align}
with the distinguished solution $v$ of the linear equation.
Hence from setting $a\equiv 0$ and $p\equiv 0$ in (\ref{ao83}), 
we learn 
\begin{equation*}
v 
\overset{\eqref{cw13}}{=} u[a=0,p=0]-u[a=0,p=0](x)
\overset{\eqref{ao83}}{=} \sum_\beta \Pi_{x\beta} \prod_{k\geq0} 0^{\beta(k)} \prod_{{\bf n}\neq{\bf 0}} 0^{\beta({\bf n})} 
\end{equation*}
and thus\footnote{we use the convention that $0^0=1$} $v=\Pi_{x0}$. 
Similarly, from keeping $a\equiv 0$ but letting $p$ vary, we obtain
\begin{equation*}
v+p
\overset{\eqref{cw13}}{=} u[a\hspace{-.5ex}=\hspace{-.5ex}0,p]-u[a\hspace{-.5ex}=\hspace{-.5ex}0,p](x)
\overset{\eqref{ao83}}{=}
 \sum_{\beta} \Pi_{x\beta} \prod_{k\geq0} 0^{\beta(k)} \hspace{-.5ex}
\prod_{{\bf n}\neq{\bf 0}} \big(\tfrac{1}{{\bf n}!} \partial^{\bf n} p(x)\big)^{\beta({\bf n})} ,
\end{equation*}
hence we deduce
that for all multi-indices $\beta\not=0$ which satisfy $\beta(k)=0$ for all $k\ge 0$ we 
must have\footnote{where we recall that $\beta=e_{\bf n}$ denotes the multi-index
with $\beta({\bf m})=\delta_{\bf m}^{\bf n}$ next to $\beta(k)=0$ for all $k$}
\begin{align}\label{ao59}
\Pi_{x\beta}(y)=\left\{\begin{array}{cc}
(y-x)^{\bf n}&\mbox{provided}\;\beta=e_{\bf n}\\
0&\mbox{else}\end{array}\right\}.
\end{align}

\medskip

\noindent
{\sc Hierarchy of linear equations}. 
The collection $\{\Pi_{x\beta}(y)\}_{\beta}$ of coefficients from (\ref{ao01})
is an element of the direct {\it product} with the same index set 
as the direct {\it sum} $\mathbb{R}[\mathsf{z}_k,\mathsf{z}_{\bf n}]$. 
Hence the direct product inherits the multiplication of the polynomial algebra
\begin{align}\label{ao52}
(\pi\pi')_{\bar\beta}=\sum_{\beta+\beta'=\bar\beta}\pi_\beta\pi'_{\beta'},
\end{align}
and is denoted as the (well-defined) algebra $\mathbb{R}[[\mathsf{z}_k,\mathsf{z}_{\bf n}]]$
of formal power series; we denote by $\mathsf{1}$ its unit element. 
We claim that in terms of (\ref{ao01}), 
(\ref{ao45}) assumes the form of
\begin{align}\label{cw09}
(\partial_2-\partial_1^2)\Pi_x=\Pi_{x}^{-}\quad ({\rm mod}\mbox{ analytic space-time functions})
\end{align}
where
\begin{align}\label{ao49}
\Pi_{x}^{-}:=\sum_{k\ge 0}\mathsf{z}_k\Pi_x^k\partial_1^2\Pi_x
-\sum_{l\ge 0}\frac{1}{l!}\Pi_x^l(D^{({\bf 0})})^lc+\xi_\tau\mathsf{1},
\end{align}
as an identity in formal power series in $\mathsf{z}_k,\mathsf{z}_{\bf n}$
with coefficients that are continuous space-time functions. 
We shall argue below that (\ref{ao49}) is effectively, i.~e.~componentwise,
well-defined despite the two infinite sums, and despite extending from 
$c\in\mathbb{R}[\mathsf{z}_k]$ to $c\in\mathbb{R}[[\mathsf{z}_k]]$.
Moreover, as will become clear by \eqref{ks05}, 
the $\beta$-component of \eqref{ao49} contains on the r.~h.~s.~only 
terms $\Pi_{x\beta'}$ for ``preceding'' multi-indices $\beta'$  
-- hence \eqref{cw09} describes a \emph{hierarchy} of equations.

%\medskip

Here comes the informal argument for \eqref{ao49}, 
relating $\{\partial_2,\partial_1^2\}u$, 
$a(u)$ and
$h(u)$ to $\{\partial_2,\partial_1^2\}\Pi_x[\tilde a,\tilde p]$,
$(\sum_{k}\mathsf{z}_k\Pi_x^k)[\tilde a,\tilde p]$ 
and $(\sum_{l}\frac{1}{l!}\Pi_x^l(D^{({\bf 0})})^lc)
[\tilde a,\tilde p]$, respectively. Here 
we have set for abbreviation $\tilde a$ $=a(\cdot+u(x))$ and $\tilde p$ $=p(\cdot+x)-p(x)$.
It is based on (\ref{ao01}), which can be compactly written as
$u(y)=u(x)+\Pi_x[\tilde a,\tilde p](y)$.
Hence the statement on $\{\partial_2,\partial_1^2\}u$ follows immediately.
Together with $a(u(y))$ $=\tilde a(u(y)-u(x))$, this also implies by (\ref{ao02}) the desired
\begin{align*}
a(u(y))=\big(\sum_{k\ge 0}\mathsf{z}_k\Pi_x^k(y)\big)[\tilde a,\tilde p].
\end{align*}
Likewise by (\ref{ao04}), we have $h[a](u(y))$ $=h[\tilde a](u(y)-u(x))$, so that
by (\ref{cw11}), we obtain the desired
\begin{align*}
h[a](u(y))=\big(\sum_{l\ge 0}\frac{1}{l!}\Pi_x^l(y)(D^{({\bf 0})})^lc\big)[\tilde a,\tilde p].
\end{align*}

\medskip

\noindent
{\sc Finiteness properties}.
The next lemma collects crucial algebraic properties.

\begin{lemma}\label{lem:alg}
The derivation $D^{({\bf 0})}$ extends from $\mathbb{R}[\mathsf{z}_k]$ to
$\mathbb{R}[[\mathsf{z}_k]]$. 
%\medskip
Moreover, for $\pi,\pi'\in\mathbb{R}[[\mathsf{z}_k,\mathsf{z}_{\bf n}]]$,
$c\in\mathbb{R}[[\mathsf{z}_k]]$, and $\xi\in\mathbb{R}$,
\begin{align}\label{cw07}
\pi^{-}:=\sum_{k\ge 0}\mathsf{z}_k\pi^k\pi'-\sum_{l\ge 0}\frac{1}{l!}\pi^l(D^{({\bf 0})})^lc
+\xi\mathsf{1}
\;\in\;\mathbb{R}[[\mathsf{z}_k,\mathsf{z}_{\bf n}]]
\end{align}
is well-defined, in the sense that the two sums are componentwise finite.
%\medskip
Finally, for 
\begin{align}\label{cw15}
[\beta]:=\sum_{k\ge 0}k\beta(k)-\sum_{{\bf n}\not={\bf 0}}\beta({\bf n})
\end{align}
we have the implication
\begin{align}\label{cw08}
&\pi_\beta=\pi'_\beta=0\quad\mbox{unless}\quad[\beta]\ge 0\;\mbox{or}\;\beta=e_{\bf n}
\;\mbox{for some}\;{\bf n}\not={\bf 0}\nonumber\\
&\Longrightarrow\nonumber\\
&\pi^-_\beta=0\quad\mbox{unless}\quad\left\{\begin{array}{l}
[\beta]\ge 0\;\mbox{or}\\
\beta=e_k+e_{{\bf n}_1}+\cdots+e_{{\bf n}_{k+1}}\\
\mbox{for some}\;k\ge 1\;
\mbox{and}\;{\bf n}_1,\dots,{\bf n}_{k+1}\not={\bf 0}.
\end{array}\right\}.
\end{align}
\end{lemma}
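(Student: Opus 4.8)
The plan is to treat the three assertions in turn, each time exploiting that the grading $|\cdot|_p$ from \eqref{cw14} (or rather a refinement thereof) is preserved or strictly increased by the relevant operations, so that only finitely many terms contribute in each fixed component. First I would introduce, besides $[\beta]$ from \eqref{cw15}, the ``population size'' $\|\beta\| := \sum_{k\ge 0}\beta(k) + \sum_{{\bf n}\not={\bf 0}}\beta({\bf n})$ and the homogeneity-type quantity $|\beta|_p = \sum_{{\bf n}\not={\bf 0}}|{\bf n}|\beta({\bf n})$. The key bookkeeping observation is: multiplication adds multi-indices, $\partial_1^2$ acts trivially on the index $\beta$, multiplying by $\mathsf{z}_k$ raises $\beta$ by $e_k$ (so raises $[\beta]$ by $k\ge 0$ and $\|\beta\|$ by $1$), and $D^{({\bf 0})}$ as in \eqref{ao13} replaces a factor $\mathsf{z}_k$ by $\mathsf{z}_{k+1}$, hence raises $[\beta]$ by $1$ and leaves $\|\beta\|$ unchanged.

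For the first assertion, that $D^{({\bf 0})}$ extends to $\mathbb{R}[[\mathsf{z}_k]]$: since $D^{({\bf 0})}\mathsf{z}^\beta$ is, by \eqref{ao13}, a finite linear combination of monomials $\mathsf{z}^{\beta'}$ with $\|\beta'\| = \|\beta\|$ and $\sum_k k\beta'(k) = \sum_k k\beta(k) + 1$, a given monomial $\mathsf{z}^{\bar\beta}$ can only be produced from finitely many $\beta$ (namely those obtained from $\bar\beta$ by lowering one index by one), so the sum defining $(D^{({\bf 0})}c)_{\bar\beta}$ is finite and $D^{({\bf 0})}$ is well-defined on formal power series.

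For the second assertion (well-definedness of $\pi^-$): I fix a target multi-index $\bar\beta$ and argue that each of the two sums contributes only finitely often. For the first sum $\sum_k \mathsf{z}_k\pi^k\pi'$, in the $\bar\beta$-component one needs $e_k + \beta^{(1)} + \dots + \beta^{(k)} + \beta' = \bar\beta$; since all summands have nonnegative entries, this forces $\beta(k)\text{-entry of }\bar\beta\ge 1$ and $k + (k+1)$-fold-sum-of-populations $\le \|\bar\beta\|$, in particular $k\le \|\bar\beta\|$ and each $\beta^{(j)},\beta'$ ranges over a finite set — so only finitely many $(k,\beta^{(1)},\dots,\beta^{(k)},\beta')$ contribute. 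For the second sum, $(D^{({\bf 0})})^l c$ has, in each component, contributions only from $c_\gamma$ with $\|\gamma\| = \|\cdot\|$ of that component and $\sum_k k(\cdot) - \sum_k k\gamma(k) = l$, and multiplying by $\pi^l$ adds $l$ more factors; chasing the population count again bounds $l \le \|\bar\beta\|$ and confines the remaining indices to finite sets. This is the step I expect to be the main obstacle: one has to be careful that extending $c$ from polynomials to $\mathbb{R}[[\mathsf{z}_k]]$ does not spoil finiteness — it does not, precisely because $D^{({\bf 0})}$ preserves $\|\cdot\|$, so the ``depth'' $l$ is still controlled by the population of $\bar\beta$ rather than by the (now unbounded) degree of $c$.

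For the third assertion (the implication \eqref{cw08}): assume $\pi_\beta = \pi'_\beta = 0$ unless $[\beta]\ge 0$ or $\beta = e_{\bf n}$. I would split $\pi^-$ into its three pieces. In $\sum_k \mathsf{z}_k\pi^k\partial_1^2\pi$ a nonzero $\bar\beta$-term is built from $e_k$ and $k+1$ factors each of which is either ``$[\cdot]\ge 0$'' or ``$= e_{\bf n}$'': if all $k+1$ factors are of the first type then $[\bar\beta] = k + \sum [\text{factors}] \ge k \ge 0$; the only way to avoid $[\bar\beta]\ge 0$ is that enough factors are of the purely-polynomial type $e_{{\bf n}_j}$ (each contributing $-1$ to $[\cdot]$) to overcome the $+k$, which — since there are exactly $k+1$ factors — happens precisely in the extreme case $\beta = e_k + e_{{\bf n}_1}+\dots+e_{{\bf n}_{k+1}}$ (and mixed cases still give $[\bar\beta]\ge 0$), and $k\ge 1$ because for $k = 0$ there is a single factor contributing $\ge -1$ offset by nothing, which one checks lands in the allowed set as well; I would write out this case distinction cleanly. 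For $\sum_l \frac{1}{l!}\pi^l (D^{({\bf 0})})^l c$, note $(D^{({\bf 0})})^l c$ has $[\cdot]\ge 0$ on every nonzero component provided $c$ does on its constant/nonnegative part — and here I would additionally use that $c$ is a power series in the $\mathsf{z}_k$ only, so every monomial $\gamma$ of $c$ has $\gamma({\bf n}) = 0$, hence $[\gamma] = \sum_k k\gamma(k)\ge 0$ automatically, and $[\cdot]$ only increases under $D^{({\bf 0})}$ and under multiplication by $\pi^l$ restricted to the $[\cdot]\ge 0$ part; so this piece contributes only to components with $[\bar\beta]\ge 0$. Finally $\xi\mathsf{1}$ sits in the $\beta = 0$ component, which has $[0] = 0 \ge 0$. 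Combining the three cases yields exactly the right-hand alternative in \eqref{cw08}.
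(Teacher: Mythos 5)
Your plan follows the same structure as the paper's proof, and your treatment of the first assertion (finiteness of preimages under $D^{({\bf 0})}$) and the first sum in \eqref{cw07} (the presence of $e_k$ in $e_k+\beta_1+\cdots+\beta_{k+1}=\beta$ bounds $k$) is correct. Your Part 3 is also essentially the paper's case analysis. However, there is a genuine gap in the well-definedness argument for the second sum $\sum_l\frac{1}{l!}\pi^l(D^{({\bf 0})})^lc$.

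You claim that ``multiplying by $\pi^l$ adds $l$ more factors,'' so that the population $\|\bar\beta\|$ bounds $l$. This is false in general, because the constant component $\pi_0$ may be nonzero (and in the application to the centered model it is: $\Pi_{x0}(y)=v(y)$ does not vanish identically). Concretely, take $c=\mathsf{z}_0$ and $\bar\beta=e_{10}$, so $\|\bar\beta\|=1$. Since $(D^{({\bf 0})})^l\mathsf{z}_0=l!\,\mathsf{z}_l$, the choice $\beta_1=\cdots=\beta_{10}=0$, $\beta_{11}=e_{10}$ gives the nonzero contribution $\pi_0^{10}$ at $l=10>\|\bar\beta\|$. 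So the population does not bound $l$, and the finiteness you need does not follow from it. You do write down the correct invariant, namely that $(D^{({\bf 0})})^l$ raises $[\cdot]_0:=\sum_k k\,\gamma(k)$ by exactly $l$, but you then discard it in favor of the population count. The paper's argument (established as a separate statement, \eqref{ao19}, by induction on $l$) exploits precisely this invariant: since $[\gamma]_0\ge 0$ for $c\in\mathbb{R}[[\mathsf{z}_k]]$, one gets $[\beta_{l+1}]_0\ge l$, and additivity and nonnegativity of $[\cdot]_0$ give $l\le[\beta_{l+1}]_0\le[\bar\beta]_0$, which bounds $l$ without any assumption on $\pi_0$. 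Replacing your population bookkeeping by this $[\cdot]_0$ bookkeeping in Part 2 closes the gap and aligns with the paper; the same quantity is also what feeds cleanly into your Part 3 argument for the second sum.
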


We note that for $\beta$ as in the second alternative on the r.~h.~s.~of
(\ref{cw08}), it follows from (\ref{ao59}) that $\Pi_{x\beta}^{-}$ is
a polynomial. Hence in view of the modulo in (\ref{cw09}), we learn
from (\ref{cw08}) that we may assume
\begin{align}\label{cw03}
\Pi_{x\beta}\equiv 0\quad
\mbox{unless}\quad[\beta]\ge 0\;\;\mbox{or}\;\;\beta=e_{\bf n}\;\mbox{for some}\;{\bf n}\not={\bf 0}.
\end{align}

%\medskip

\begin{proof}[Proof of Lemma \ref{lem:alg}]
We first address the extension of $D^{({\bf 0})}$
and note that from (\ref{ao13}) we may read off
the matrix representation of $D^{({\bf 0})}$
$\in{\rm End}(\mathbb{R}[\mathsf{z}_k])$ w.~r.~t.~(\ref{ao14}) given by
\begin{align}\label{ao20}
\lefteqn{(D^{({\bf 0})})_\beta^\gamma=(D^{({\bf 0})}\mathsf{z}^\gamma)_\beta
\overset{\eqref{ao13}}{=}
\sum_{k\ge 0}(k+1)\big(\mathsf{z}_{k+1}\partial_{\mathsf{z}_k}\mathsf{z}^\gamma\big)_\beta
}\nonumber\\
&\overset{\eqref{ao14}}{=}\sum_{k\ge 0}(k+1)\gamma(k)\left\{\begin{array}{cl}
1&\mbox{provided}\;\gamma+e_{k+1}=\beta+e_k\\
0&\mbox{otherwise}\end{array}\right\}.
\end{align}
From this we read off that $\{\gamma|(D^{({\bf 0})})_\beta^\gamma\not=0\}$
is finite for every $\beta$, which implies that $D^{({\bf 0})}$ naturally extends from
$\mathbb{R}[\mathsf{z}_k]$ to $\mathbb{R}[[\mathsf{z}_k]]$. 
With help of (\ref{ao52}) the derivation property (\ref{ao15}) can be expressed coordinate-wise, 
and thus extends to $\mathbb{R}[[\mathsf{z}_k]]$.

%\medskip

We now turn to (\ref{cw07}), which component-wise reads
\begin{align}\label{ao51}
\pi_{\beta}^{-}&=\sum_{k\ge 0}\sum_{e_k+\beta_1+\cdots+\beta_{k+1}=\beta}
\pi_{\beta_1}\cdots\pi_{\beta_k}\pi_{\beta_{k+1}}'\nonumber\\
&-\sum_{l\ge 0}\frac{1}{l!}\sum_{\beta_1+\cdots+\beta_{l+1}=\beta}
\pi_{\beta_1}\cdots\pi_{\beta_l}((D^{({\bf 0})})^lc)_{\beta_{l+1}}
+\xi\delta_{\beta}^0,
\end{align}
and claim that the two sums are effectively finite. For the first term of the r.~h.~s.~this
is obvious since thanks to the presence of\footnote{$\gamma=e_k$ denotes the
multi-index with $\gamma(l)=\delta_l^k$ next to $\gamma({\bf n})=0$} $e_k$ in
$e_k+\beta_1+\cdots+\beta_{k+1}=\beta$, for fixed $\beta$
there are only finitely many $k\ge 0$ for which this relation can be satisfied.

%\medskip

In preparation for the second r.~h.~s.~term of (\ref{ao51}) we now establish that
\begin{align}\label{ao19}
((D^{({\bf 0})})^l)_\beta^\gamma=0\quad\mbox{unless}\quad[\beta]_0=[\gamma]_0+l,
\end{align}
where we introduced the scaled length $[\gamma]_0:=\sum_{k\ge 0}k\gamma(k)\in\mathbb{N}_0$. 
The argument for (\ref{ao19}) proceeds
by induction in $l\ge 0$. It is tautological for the base case $l=0$. In order
to pass from $l$ to $l+1$ we write
$((D^{({\bf 0})})^{l+1})_\beta^\gamma$ $=\sum_{\beta'}((D^{({\bf 0})})^{l})_\beta^{\beta'}
(D^{({\bf 0})})_{\beta'}^\gamma$; by induction hypothesis, the first factor vanishes
unless $[\beta]_0=[\beta']_0+l$. We read off (\ref{ao20}) that the second factor
vanishes unless $[\beta']_0=[\gamma]_0+1$, so that the product vanishes unless
$[\beta]_0=[\gamma]_0+(l+1)$, as desired.

%\medskip

Equipped with (\ref{ao19}) we now turn to the second r.~h.~s.~term of (\ref{ao51})
and note that
$((D^{({\bf 0})})^lc)_{\beta_{k+1}}$ vanishes unless $l\le[\beta_{k+1}]_0\le[\beta]_0$,
which shows that also here, only finitely many $l\ge 0$ contribute for fixed $\beta$.

%\medskip

We turn to the proof of \eqref{cw08}. 
We use \eqref{ao51} and give the proof for every summand separately. 
For the first term on the r.~h.~s.~of \eqref{ao51} we obtain by additivity of $[\cdot]$ that
$[\beta]=k+[\beta_1]+\cdots+[\beta_{k+1}]$. Note that $\pi_{\beta_i}$ is only 
non vanishing if $[\beta_i]\geq-1$. If at least one of the $\beta_1,\dots,\beta_{k+1}$ satisfies 
$[\beta_i]\geq0$, we obtain therefore $[\beta]\geq k-k = 0$.
For the second r.~h.~s.~term in \eqref{ao51} we appeal to \eqref{ao19}:
Since $D^{(\0)}$ doesn't affect the $\z_\n$ components, \eqref{ao19} extends 
from $[\cdot]_0$ to $[\cdot]$. Together with $c\in\mathbb{R}[[\z_k]]$ 
this yields $[\beta_{l+1}]\geq l$. Hence as above 
$[\beta]=[\beta_1]+\cdots+[\beta_{l+1}]\geq -l + [\beta_{l+1}]\geq 0$. 
\end{proof}

%\medskip

\noindent
{\sc Homogeneity}.
We return to a heuristic discussion.
Provided we include, like for (\ref{ao32}), $a$ into our considerations,
the original equation (\ref{ao22}) has a scaling symmetry: 
Considering for $s\in(0,\infty)$
the parabolic space-time rescaling $Sy=(sy_1,s^2y_2)$, we have for any exponent $\alpha$
\begin{align}\label{ao85}
\lefteqn{(u,\xi,a)\;\mbox{satisfies (\ref{ao22})}}\nonumber\\
&\Longrightarrow\quad
\big(s^{-\alpha} u(S\cdot),s^{2-\alpha}\xi(S\cdot),a(s^{\alpha}\cdot)\big)
=:(\tilde u,\tilde\xi,\tilde a)
\;\mbox{satisfies (\ref{ao22})}.
\end{align}
Suppose the scaling transformation
$\xi\mapsto\tilde\xi$ preserves the law, which for white
noise is the case with $\alpha-2=-\frac{D}{2}$, i.~e.~$\alpha=\frac{1}{2}$.
Since in view of Section \ref{sec:post}, the counter term only depends on the law,
it is natural to postulate, in line with that section, that 
the solution manifold of the renormalized problem inherits this invariance\footnote{
since this scale invariance in law is not consistent with
the mollification $\xi_\tau$ this discussion pertains to the limiting
solution manifold}.

%\medskip

It is also natural to postulate that the parameterization by the $p$'s (given a base point $x$)
is consistent with (\ref{ao85}) in the sense that $p$ transforms as $u$, i.~e.~
we have invariance under
\begin{align*}
(u,\xi,a,x,p)\;\mapsto\;
%\big(s^{-\alpha} u(S\cdot),s^{2-\alpha}\xi(S\cdot),a(s^{\alpha}\cdot),s^{-\alpha} p(S\cdot)\big)
(\tilde u,\tilde\xi,\tilde a,\tilde x:=S^{-1}x,\tilde p:=s^{-\alpha} p(S\cdot)).
\end{align*}
We now appeal to the series expansion (\ref{ao83}), both as it stands and 
with $(x,y,u,\xi,a,p)$ replaced
by $(\tilde x,\tilde y:=S^{-1}y,\tilde u,\tilde\xi,\tilde a,\tilde p)$.
Because of $u(y)-u(x)$ 
$=s^\alpha(\tilde u(\tilde y)-\tilde u(\tilde x))$, we
obtain a relation between the two right-hand sides. 
It is natural to postulate that the coefficients $\{\Pi_{\cdot,\beta}\}_{\beta}$
are individually consistent with this invariance, leading to 
\begin{align}\label{ao86}
\Pi_{Sx\beta}[\xi](Sy)=s^{|\beta|}\Pi_{x\beta}[s^{2-\alpha}\xi(S\cdot)](y),
\end{align}
where the ``homogeneity'' $|\beta|$ of the multi-index $\beta$ is given by
\begin{align}\label{cw24}
|\beta|:=\alpha(1+[\beta])+|\beta|_p,
\end{align}
cf.~(\ref{cw14}) and (\ref{cw15}). We note that
\begin{align}\label{as27}
|e_{\bf n}|=|{\bf n}|
\end{align}
so that (\ref{cw24}) is consistent with (\ref{ao59}).

%\medskip

Appealing once more to the invariance in law of $\xi$ under (\ref{ao85}), we 
obtain from (\ref{ao86}) that 
the law of $s^{-|\beta|}\Pi_{Sx\,\beta}(Sy)$ coincides with the law of $\Pi_{x\beta}(y)$, in particular 
\begin{align}\label{ao87}
\mbox{the law of}\;s^{-|\beta|}\Pi_{Sx\,\beta}(Sy)\;\mbox{does not depend on $s\in(0,\infty)$}.
\end{align}
By the invariance of the (original) solution manifold under 
$(u,\xi)\mapsto(\tilde u:=u(\cdot+z),\tilde\xi:=\xi(\cdot+z))$, 
which by our assumption (\ref{ao30bis}) is passed
on to the renormalized solution manifold, it is natural to impose that the parameterization
is invariant under $(u,\xi,x,p)\mapsto(\tilde u,\tilde\xi,x+z,p(\cdot+z))$, 
and that the coefficients in (\ref{ao01}) are individually consistent with this invariance,
so that we likewise have 
\begin{align}\label{ao88}
\mbox{the law of}\;\Pi_{x+z\,\beta}(y+z)\;\mbox{does not depend on $z\in\mathbb{R}^2$}.
\end{align}
Specifying to $x=0$, the invariance (\ref{ao87}) implies
that $\mathbb{E}^\frac{1}{p}|\Pi_{0\beta}(y)|^p$ depends on $y$ only through $\frac{y}{|y|}$.
From the invariance (\ref{ao88}) we thus learn that
$\mathbb{E}^\frac{1}{p}|\Pi_{x\beta}(y)|^p$ depends on $x,y$ only through $\frac{y-x}{|y-x|}$.
Since $\frac{y-x}{|y-x|}$ has compact range, this suggest that
\begin{align*}
\mathbb{E}^\frac{1}{p}|\Pi_{x\beta}(y)|^p\lesssim |y-x|^{|\beta|},
\end{align*}
which is our main result, see \eqref{cw01} in the next section.

%\medskip

The scaling invariance \eqref{ao85} also connects to the notion of ``subcriticality'' 
which is often referred to in the realm of singular SPDEs. 
Loosely speaking, it means that by zooming in on small scales, 
the nonlinear term becomes negligible. 
Indeed, as can be seen from \eqref{ao85}, the rescaled nonlinearity $\tilde a$ 
converges to the constant $a(0)$ in the limit $s\downarrow 0$, i.~e.~
the SPDE (\ref{ao22})
turns into a linear one. This is true iff $\alpha>0$, and provides
the reason for restricting to $\alpha>0$ in the assumption of Theorem~\ref{thm}, 
which is the sub-critical regime for \eqref{ao22}. 

%%%%%%%%%%%%%%%%%%%%%%%%%%%%%%%%%%%%%%%%%%%%%%%%%%%%%%%%%%%%%%%%%%%%%%%%%%%%%%%%%%%%

\section{The main result:\,Stochastic estimates of the centered model}\label{sec:main}

\noindent
The main result in \cite{LOTT} states that the objects introduced in an informal way 
in the previous subsection 
indeed can be rigorously defined, 
provided the noise $\xi$ is replaced by its mollified version $\xi_\tau$ as discussed at the end of Section~\ref{sec:Schauder}.
Moreover, the constants $c_\beta$ can be chosen in such a ($\tau$-dependent) way, 
that the centered model $\Pi_x$ satisfies stochastic estimates 
that are uniform in the mollification scale $\tau>0$. 

\begin{theorem}\label{thm}
Suppose the law of $\xi$ is invariant under \eqref{ao30bis};
suppose that it satisfies a spectral gap inequality \eqref{cw17} 
with exponent $\alpha\in(\max\{0,1-\frac{D}{4}\},1)\setminus\mathbb{Q}$. 

%\medskip

Then given $\tau>0$, there exists a deterministic $c\in\mathbb{R}[[\mathsf{z}_k]]$, 
and for every $x\in\mathbb{R}^2$,
a random\footnote{by this we mean a formal power series in $\mathsf{z}_k,\mathsf{z}_{\bf n}$
with values in the twice continuously differentiable space-time functions} 
$\Pi_{x}\in C^2[[\mathsf{z}_k,\mathsf{z}_{\bf n}]]$, 
and a random $\Pi_{x}^{-}\in C^0[[\mathsf{z}_k,\mathsf{z}_{\bf n}]]$ 
that are related by
\eqref{ao49} and
\begin{align}
(\partial_2-\partial_1^2)\Pi_{x\beta}&=\Pi_{x\beta}^{-}
\quad (\mbox{{\rm mod} polynomial of degree } \leq|\beta|-2),\label{cw04}
%\Pi_x^{-}&=\sum_{k\ge 0}\mathsf{z}_k\Pi_x^k\partial_1^2\Pi_x
%-\sum_{l\ge 0}\frac{1}{l!}\Pi_x^l(D^{({\bf 0})})^lc+\xi_\tau\mathsf{1},\nonumber
\end{align}
and that satisfy \eqref{ao59}, the population condition \eqref{cw03} and 
\begin{align}\label{cw10}
c_\beta=0\quad\mbox{unless}\quad|\beta|< 2.
\end{align}
%\medskip
Moreover, we have for $p<\infty$, $x,y\in\mathbb{R}^{2}$ and $t>0$ the estimates
\begin{align}
\mathbb{E}^\frac{1}{p}|\Pi_{x\beta}(y)|^p&\lesssim_{\beta,p}|y-x|^{|\beta|},\label{cw01}\\
\mathbb{E}^\frac{1}{p}|\Pi_{x\beta t}^{-}(y)|^p&\lesssim_{\beta,p}
(\sqrt[4]{t})^{\alpha-2}(\sqrt[4]{t}+|y-x|)^{|\beta|-\alpha}.\label{cw02}
\end{align}
\end{theorem}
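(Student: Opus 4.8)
The plan is to construct $(\Pi_x,\Pi_x^-,c)$ inductively along the hierarchy of multi-indices $\beta$ ordered by the homogeneity $|\beta|$ (equivalently, by $[\beta]$ together with the degree structure of $\beta$), exploiting the fact noted after \eqref{ao49} that the $\beta$-component of $\Pi_x^-$ depends only on strictly ``preceding'' components $\Pi_{x\beta'}$, so that at stage $\beta$ everything on the right-hand side is already known. The base case $\beta=0$ and $\beta=e_{\bf n}$ is handled by \eqref{ao59}, with $\Pi_{x0}=v$ the distinguished solution from Lemma~\ref{lem:int}. For the inductive step I would first define the counter-term constant: since $\Pi_x^-$ is built from $\xi_\tau$ and lower-order data, I compute the candidate right-hand side $\Pi_{x\beta}^-$ with $c_\beta$ left undetermined, and then \emph{choose} $c_\beta$ (for those $\beta$ with $|\beta|<2$, as dictated by \eqref{cw10}) by a BPHZ-type condition — morally, $c_\beta$ is fixed so that the expectation of the relevant renormalized quantity vanishes, which is precisely what kills the would-be divergence seen in the computation of $\mathbb{E}(\partial_1 v)^2$ at the end of Section~\ref{sec:Schauder}. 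For $|\beta|\ge2$ one sets $c_\beta=0$. Then $\Pi_{x\beta}$ is defined by solving \eqref{cw04} via the annealed Schauder Lemma~\ref{lem:int}, applied with $\eta=|\beta|$, $\alpha$ the noise exponent, and $f=\Pi_{x\beta t}^-$; this simultaneously yields existence, the normalization by the polynomial mod, and the bound \eqref{cw01} — \emph{provided} the input estimate \eqref{cw02} on $\Pi_{x\beta}^-$ is already in hand.

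So the real content is a \emph{single induction that proves \eqref{cw01} and \eqref{cw02} together}: Lemma~\ref{lem:int} converts \eqref{cw02} into \eqref{cw01} at the same level $\beta$, and one must then propagate \eqref{cw01} (at all preceding levels) back into \eqref{cw02} at level $\beta$ through the algebraic structure of \eqref{ao49}/\eqref{ao51}. The term $\sum_k \mathsf{z}_k\Pi_x^k\partial_1^2\Pi_x$ is the delicate one: $\partial_1^2\Pi_{x}$ has, by \eqref{cw01} and the fact that $\partial_1^2$ lowers homogeneity by $2$, only annealed H\"older regularity $|\beta'|-2$, which for the low-lying $\beta'$ is negative — this is exactly the singular product, and here it must be \emph{reconstructed}, not multiplied naively. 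This is where the spectral gap assumption \eqref{cw17} enters: one differentiates in the Malliavin sense, uses that the Malliavin derivative of $\Pi_x$ satisfies a linearized version of the hierarchy with better (by $\alpha$, roughly) homogeneity, applies the spectral gap inequality to bound the $L^p(\mathbb{E})$ norm of $\Pi_{x\beta t}^-$ by its expectation plus the $L^p$ norm of its Malliavin derivative, and closes the loop — the expectation being controlled precisely by the choice of $c_\beta$, and the Malliavin-derivative term being controlled by the inductive hypothesis applied to the linearized hierarchy. The constraint $\alpha>1-\frac{D}{4}$ is what makes this gain strictly positive and the induction terminate; the constraint $\alpha\notin\mathbb{Q}$ (strengthening $\eta\notin\mathbb{Z}$ in Lemma~\ref{lem:int}) ensures no resonant integer homogeneities, so that every application of the Schauder lemma is legitimate and the polynomial ambiguity is of strictly lower degree.

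To organize the induction cleanly I would measure progress by the pair $([\beta], |\beta|_p)$ or directly by $|\beta|=\alpha(1+[\beta])+|\beta|_p$: in $\sum_k \mathsf{z}_k\Pi_x^k\partial_1^2\Pi_x$ each factor $\Pi_{x\beta_i}$ carries a $\mathsf{z}_k$ (raising $[\cdot]$ by $k$) while $\partial_1^2$ costs homogeneity $2$ but the leading $\mathsf{z}_k$ and the summation structure \eqref{cw08} guarantee that every contributing $\beta_i$ is genuinely of lower homogeneity than $\beta$; likewise $(D^{({\bf 0})})^l c$ only couples to $c\in\mathbb{R}[[\mathsf{z}_k]]$ components, which are constants, so that term contributes $\Pi_x^l$ of strictly lower order. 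Having both estimates at all lower levels, \eqref{cw02} at level $\beta$ follows by: (i) triangle inequality over the finitely many terms in \eqref{ao51}; (ii) for the product terms, the spectral-gap/Malliavin argument above, using \eqref{cw01} on the factors; (iii) for the $\xi_\tau\mathsf{1}$ term ($\beta=0$), the direct white-noise computation giving the $(\sqrt[4]{t})^{\alpha-2}$ scaling; (iv) for the $c$-terms, boundedness of the constants $c_\beta$ uniformly in $\tau$, which itself is part of the inductive claim and is where the renormalization ``just works'' because of the BPHZ choice.

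\textbf{Main obstacle.} The crux — and the place where all the novelty of \cite{LOTT} lives — is step (ii): showing that the singular products $\mathsf{z}_k\Pi_x^k\partial_1^2\Pi_x$ admit $\tau$-uniform annealed bounds of the exact form \eqref{cw02}. This requires (a) setting up the Malliavin derivative of the whole hierarchy and identifying that $\partial$ (Malliavin) applied to $\Pi_x$ solves a linearized hierarchy whose homogeneities are shifted by $+\alpha$ relative to a reference — hence amenable to the same inductive estimates but with room to spare; (b) invoking the spectral gap inequality in the precise quantitative form \eqref{cw17} to reduce the $p$-th moment of $\Pi_{x\beta t}^-$ to its mean plus the moment of its Malliavin derivative; and (c) checking that the mean is exactly cancelled by $c_\beta$, i.e. that the BPHZ subtraction is both well-defined and homogeneous, so that no spurious polynomial-in-$\tau$ blow-up survives. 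Everything else — the Schauder conversion, the bookkeeping of which $\beta'$ precede $\beta$, the reflection-symmetry bookkeeping ensuring \eqref{ao59}-type vanishing persists — is, in spirit, routine given Lemmas~\ref{lem:int} and~\ref{lem:alg}.
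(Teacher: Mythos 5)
Your high-level outline is consistent with the paper's strategy (inductive construction, BPHZ choice of $c_\beta$, annealed Schauder to pass from \eqref{cw02} to \eqref{cw01}, spectral gap plus Malliavin derivative to bound $\Pi^-$), but two load-bearing mechanisms are missing or wrong, and the paper explicitly flags both.

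\textbf{Ordering.} Inducting ``by the homogeneity $|\beta|$'' does not close. For the $k=0$ contribution $\mathsf{z}_0\partial_1^2\Pi_x$ in \eqref{ao49}, the component rule \eqref{ao51} yields a term indexed by $\beta_1=\beta-e_0$, and one checks from \eqref{cw24} that $|\beta_1|=|\beta|$ — no strict decrease. This is precisely why the paper replaces $|\cdot|$ by $|\beta|_\prec=|\beta|+\lambda\beta(0)$ in \eqref{ks04}: only $\prec$ is simultaneously coercive, cf.~\eqref{as09}, and strictly triangular, cf.~\eqref{ks05}. Your parenthetical ``equivalently, by $[\beta]$ together with the degree structure'' does not repair this unless you make the $\lambda\beta(0)$ correction explicit.

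\textbf{Malliavin mechanism and re-expansion.} The central technical idea that makes the Malliavin/SG route feasible is entirely absent from your sketch: applying $\delta$ to \eqref{ao49} does not kill $c$, because $c$ enters through $(D^{(\mathbf{0})})^l c$. The paper's route is to evaluate \eqref{ao49} at the base point (giving \eqref{cw32}, where $c$ appears isolated and drops out under $\delta$, yielding \eqref{cw20}), then deploy the re-expansion map $\Gamma^*_{yx}$ to recover a two-base-point identity \eqref{cw25}, and then — for analytic reasons, since $\delta\Pi_y$ is only \emph{modelled} to order $\tfrac{D}{2}+\alpha$ near $x$ rather than being globally smoother — replace $\delta\Gamma^*_{yx}$ by $\mathrm{d}\Gamma^*_{yx}$ with loosened population \eqref{cw30}--\eqref{cw31}, arriving at the actual workhorse \eqref{cw29} and the modelledness statement \eqref{cw39}. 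Your claim that $\delta\Pi_x$ ``solves a linearized hierarchy with homogeneities shifted by $+\alpha$'' is not the mechanism the paper uses, and the shift is $\tfrac{D}{2}$ (the Cameron--Martin gain of $\delta\xi$ over $\xi$), not $\alpha$; crucially, that gain is not global but only a local modelledness near $x$, which is exactly what forces the introduction of $\Gamma^*_{yx}$ and $\mathrm{d}\Gamma^*_{yx}$ (Section~\ref{sec:Gamma}) in the first place. Without this, ``the singular product is reconstructed, not multiplied naively'' remains a slogan rather than an argument.
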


The important feature is that the constants in (\ref{cw01}) and (\ref{cw02})
are uniform in $\tau\downarrow 0$.

%\medskip
 
We remark that we may pass from (\ref{cw02}) to (\ref{cw01}) by Lemma \ref{lem:int}.
Indeed, because of (\ref{cw03}) we may restrict to $\beta$ with $[\beta]\ge 0$.
In this case, by our assumption $\alpha\not\in\mathbb{Q}$, 
\begin{align}\label{as17}
[\beta]\ge0\quad\overset{\eqref{cw24}}{\Longrightarrow}\quad
|\beta|\not\in\mathbb{Z},
\end{align}
next to $|\beta|\ge\alpha$.
Hence we may indeed apply Lemma \ref{lem:int} with $\eta=|\beta|$ and
(\ref{cw02}) as input. The output yields a $\Pi_{x\beta}$ satisfying
(\ref{cw04}) and (\ref{cw01}).

\medskip

\noindent
{\sc Uniqueness and (implicit) BPHZ renormalization}. The construction
of $\Pi_x$ in \cite{LOTT} proceeds by an inductive algorithm in $\beta$.
The ordering\footnote{
this ordering coincides with the one chosen in \cite{LO} 
but it slightly differs from the one in \cite{LOTT}, 
which is imposed by the restricted triangularity of 
${\rm d}\Gamma^*$ in Section \ref{sec:Mall}; 
for simplicity we stick to (\ref{ks04})} 
on the multi-indices is provided by 
\begin{equation}\label{ks04}
|\beta|_{\prec}:=|\beta|+\lambda\beta(0)\quad\mbox{for fixed}\;\lambda\in(0,\alpha),
\end{equation}
and we will write $\gamma\prec\beta$ for $|\gamma|_\prec <|\beta|_\prec$.
As opposed to the ordering provided by the homogeneity, 
$\prec$ is coercive: For 
fixed $\beta$ there are only finitely many $\gamma$ with 
$\gamma\prec\beta$, see (\ref{as09}), which is important for the estimates.
Moreover, (\ref{ks04}), as opposed to the ordering by homogeneity, 
allows for the triangular structure:
\begin{equation}\label{ks05}
\Pi_{x\beta}^{-}-c_\beta\;\;\mbox{depends on}\;(\Pi_{x\gamma},c_\gamma)\;
\mbox{only through}\;\gamma\;\mbox{with}\;\gamma\prec\beta. 
\end{equation}
%
%which can be easily checked on the component-wise level (\ref{ao51}). 
Indeed, by the component-wise \eqref{ao51}, we have to check that for $k\geq0$ 
and $e_k+\beta_1+\dots+\beta_{k+1}=\beta$ we have $\beta_1,\dots,\beta_{k+1}\prec\beta$, 
and that for $l\geq1$ and $\beta_1+\dots+\beta_{l+1}=\beta$ with $((D^{(\0)})^l)_{\beta_{l+1}}^\gamma$ we have $\beta_1,\dots,\beta_l,\gamma\prec\beta$. 
In the former case, and for $k\geq1$, we obtain from \eqref{cw24} that 
$|\beta_1|+\cdots+|\beta_{k+1}|=|\beta|$, and thus by $|\cdot|\geq\alpha>0$ that 
$|\beta_1|,\dots,|\beta_{k+1}|<|\beta|$. 
We conclude together with $\beta_1(0),\dots,\beta_{k+1}(0)\leq\beta(0)$.
In the case $k=0$, again by \eqref{cw24}, 
we have $|\beta_1|=|\beta|$, 
and we conclude by $1+\beta_1(0)=\beta(0)$. 
In the latter case, we use \eqref{ao19} and \eqref{cw24} to see 
$|\beta_1|+\cdots+|\beta_l|+|\gamma|=|\beta|$, 
which by $l\geq1$ implies as above $|\beta_1|,\dots,|\beta_l|,|\gamma|<|\beta|$. 
From \eqref{ao20} we also read off that $\gamma(0)\leq\beta_{l+1}(0)$, 
and we thus conclude by $\beta_1(0),\dots,\beta_{l+1}(0)\leq\beta(0)$. 

%\medskip

We now argue that within this induction, $(c,\Pi_x,\Pi_x^{-})$ is determined.
Indeed, the uniqueness statement of Lemma \ref{lem:int} implies that for given $\beta$,
$\Pi_{x\beta}$ is determined by $\Pi_{x\beta}^{-}$. 
According to (\ref{ks05}), $\Pi_{x\beta}^{-}-c_\beta$ is determined
by the previous steps.
Finally, we note that provided $|\beta|<2$, we have
\begin{align}\label{cw16}
|\mathbb{E}\Pi_{x\beta t}^{-}(x)|\le\mathbb{E}|\Pi_{x\beta t}^{-}(x)|
\overset{\eqref{cw02}}{\lesssim}(\sqrt[4]{t})^{|\beta|-2}
\overset{t\uparrow\infty}{\rightarrow}0,
\end{align}
so that $c_\beta$, because it is deterministic\footnote{and
independent of the base point $x$} may be recovered from 
$c_\beta=-\lim_{t\uparrow\infty}\mathbb{E}(\Pi_{x\beta}^{-}-c_\beta)_t(x)$.
Hence also $c_\beta$ is determined. 
Fixing the counter term by making an expectation\footnote{in our case
it is a space-time next to an ensemble average} vanish like in (\ref{cw16}) 
corresponds to what Hairer assimilates to a BPHZ renormalization. See
\cite[Theorem 6.18]{BHZ} for the form BPHZ renormalization takes
within regularity structures.

\medskip

\noindent
{\sc Mission accomplished.}
Returning to the end of Section \ref{sec:Schauder}, we may claim ``mission accomplished'':
\begin{itemize}
\item On the one hand, the form of the counter terms preserve a number of symmetries
of the original solution manifold: shift in $x$, reflection in $x_1$, shift in $u$,
and to some extend are guided by scaling in $x$.
\item On the other hand, in a term-by-term sense as encoded by (\ref{ao83}), the solution manifold
of the renormalized equation stays under control as $\tau\downarrow 0$, cf.~(\ref{cw01})
and (\ref{cw02}).
\end{itemize}
Moreover, the constants $c_\beta=c_\beta^{\tau}$ that determine the counter term
via (\ref{cw11}) are (canonically) determined by the large-scale part of
the estimate (\ref{cw02}).

%\medskip

As discussed in the introduction,
the connection between this term-by-term approach to the solution manifold
and the solution of an actual initial/boundary value problem is provided by
the second part of regularity structures.
This second part, a fixed point argument based
on a truncation of (\ref{ao83}) to a finite sum\footnote{by restricting to
homogeneities $|\beta|<2$; in our quasi-linear case,
the sum stays infinite w.~r.~t.~the $\mathsf{z}_0$-variable, but one has
analyticity in that variable since $1+\mathsf{z}_0$ plays the role
of a constant elliptic coefficient}, is not addressed
in these lecture notes.

%%%%%%%%%%%%%%%%%%%%%%%%%%%%%%%%%%%%%%%%%%%%%%%%%%%%%%%%%%%%%%%%%%%%%%%

\section{Malliavin derivative and Spectral gap (SG)}\label{sec:Mall}

\noindent
In view of the discussion at the end of the statement of Theorem \ref{thm}, the
main issue is the estimate (\ref{cw02}) of $\Pi_{x\beta}^{-}$. Indeed, 
its definition of (\ref{ao49}) still contains the singular
product $\Pi_x^k\partial_1^2\Pi_x$
{and the collection of
deterministic constants $c$
that diverge as the UV regularization fades away. Hence we seek a
relation between $\Pi_x^{-}$ and $\Pi_x$ that is more stable than (\ref{ao49});
in fact, it will be a relation between the {\it families} $\{\Pi_x^{-}\}_x$
and $\{\Pi_x\}_x$ based on symmetries under a change of the base point $x$.
This relation is formulated on the level of the derivative w.~r.~t.~the noise $\xi$,
also known as the Malliavin derivative. We start by motivating this approach.

\medskip

\noindent
{\sc Heuristic discussion of a stable relation} $\{\Pi_x\}_x\mapsto\{\Pi_x^{-}\}_x$.
Let $\delta$ denote the operation of taking the derivative of an object like $\Pi_{x\beta}(y)$,
which is a functional of $\xi$, in direction of an infinitesimal variation $\delta\xi$ of
the latter\footnote{in the Gaussian case, this would be an element of the Cameron-Martin space}.
Clearly, since $c_\beta$ is deterministic, we have $\delta c_\beta=0$. However, applying
$\delta$ to (a component of) (\ref{ao49}) does not eliminate $c$ because of the specific
way $c$ enters (\ref{ao49}), which is dictated by the fundamental symmetry (\ref{ao09}).
However, when evaluating (\ref{ao49}) at the base point $x$ itself and appealing to the built-in
\begin{equation}\label{ks02}
\Pi_x(x)=0, 
\end{equation}
see (\ref{ao83}) or (\ref{cw01}), 
it collapses to
\begin{align}\label{cw32}
\Pi_x^{-}(x)=\mathsf{z}_0\partial_1^2\Pi_x(x)-c+\xi_\tau(x)\mathsf{1}.
\end{align}
This isolates $c$ so that it can be eliminated by applying $\delta$:
\begin{align}\label{cw20}
\delta\Pi_x^{-}(x)=\mathsf{z}_0\partial_1^2\delta\Pi_x(x)+\delta\xi_\tau(x)\mathsf{1}.
\end{align}
Clearly, (\ref{cw20}) is impoverished in the sense that the active point coincides with
the base point.

%\medskip

Instead of attempting to modify the active point, the idea is to
modify the base point from $x$ to $y$. Such a change of base point, which will be
rigorously introduced in Section \ref{sec:Gamma}, amounts
to a change of coordinates in the heuristic representation (\ref{ao01}):
\begin{align}\label{cw21}
u=\left\{
\begin{array}{l}
u(x)+\sum_{\beta}\Pi_{x\beta}\mathsf{z}^\beta[a(\cdot+u(x)),p_x],\\
u(y)+\sum_{\beta}\Pi_{y\beta}\mathsf{z}^\beta[a(\cdot+u(y)),p_y],
\end{array}\right.
\end{align}
for some polynomials $p_x,p_y$ vanishing at the origin. The form in which the
$u$-shift appears in (\ref{cw21}) suggests that this change
of coordinates can be algebrized by an algebra endomorphism\footnote{in a first
reading, the star should be seen as mere notation; $\Gamma_{yx}^*$ is actually the
algebraic dual of a {\it linear} endomorphism $\Gamma_{yx}$ on the pre-dual space, 
see Lemma~\ref{LEM:def};
it is $\Gamma_{yx}$ that can be assimilated to the object denoted by the same symbol
in regularity structures; for a concise reference see \cite[Section 5.3]{LOT}} $\Gamma_{yx}^*$
of $\mathbb{R}[[\mathsf{z}_k,\mathsf{z}_{\bf n}]]$ with the properties
\begin{align}\label{cw27}
\Pi_y=\Gamma_{yx}^*\Pi_x+\Pi_y(x)\quad\mbox{and}\quad \Gamma_{yx}^*=\sum_{l\ge 0}
\frac{1}{l!}\Pi_y^l(x)(D^{({\bf 0})})^l\;\mbox{on}\;\mathbb{R}[[\mathsf{z}_k]],
\end{align}
see the discussion of finite $u$-shifts around (\ref{cw11}). 
Recall that an algebra endomorphism $\Gamma^*_{yx}$ is a linear map from $\mathbb{R}[[\mathsf{z}_k,\mathsf{z}_{\bf n}]]$ to $\mathbb{R}[[\mathsf{z}_k,\mathsf{z}_{\bf n}]]$ satisfying
\begin{equation}\label{mu}
\Gamma^*_{yx} \pi \pi' = (\Gamma^*_{yx} \pi)(\Gamma^*_{yx}\pi') 
\quad\textnormal{for }\pi,\pi'\in \mathbb{R}[[\mathsf{z}_k,\mathsf{z}_{\bf n}]].
\end{equation}
We claim that (\ref{cw27}) implies 
\begin{align}\label{cw28}
\Pi_y^{-}=\Gamma_{yx}^*\Pi_x^{-}.
\end{align} 
Indeed, applying $\Gamma_{yx}^*$ to definition (\ref{ao49}) we obtain by (\ref{mu})
\begin{align*}
\Gamma_{yx}^*\Pi_{x}^{-}
\hspace{-.5ex}=\hspace{-.5ex}
\sum_{k\ge 0}(\Gamma_{yx}^*\mathsf{z}_k)(\Gamma_{yx}^*\Pi_x)^k
\partial_1^2\Gamma_{yx}^*\Pi_x
\hspace{-.3ex}-\hspace{-.3ex}
\sum_{l\ge 0}\tfrac{1}{l!}(\Gamma_{yx}^*\Pi_x)^l\Gamma_{yx}^*(D^{({\bf 0})})^lc
+\xi_\tau\mathsf{1}.
\end{align*}
We substitute $\Gamma_{yx}^*\Pi_x$ according to the first item in (\ref{cw27}),
substitute $\Gamma_{yx}^*\mathsf{z}_k$ $=\sum_{l\ge 0}\tbinom{k+l}{k}\Pi_y^l(x)\mathsf{z}_{k+l}$
and $\Gamma_{yx}^*(D^{({\bf 0})})^lc$
according to the second item in (\ref{cw27}) and definition (\ref{ao13}),
and finally appeal to the binomial formula in both ensuing double sums to obtain 
(\ref{ao49}) with $x$ replaced by $y$, establishing (\ref{cw28}).

%\medskip

In view of the scaling (\ref{ao87}) and the transformation \eqref{cw27} we expect that 
the laws of $s^{|\beta|-|\gamma|}(\Gamma^*_{yx})_\beta^\gamma$ and of 
$(\Gamma^*_{SySx})_\beta^\gamma$ to be identical.
On the other hand, we expect $(\Gamma^*_{SySx})_\beta^\gamma$ to
converge to $(\Gamma^*_{00})_\beta^\gamma$ as $s\downarrow0$, and we expect
$\Gamma^*_{00}$ to be the identity. This suggests strict triangularity: 
\begin{align}\label{cw26}
(\Gamma^*_{yx}-{\rm id})_{\beta}^{\gamma}=0\quad\mbox{unless}\quad|\gamma|<|\beta|.
\end{align}

%\medskip

We claim that applying $\Gamma_{yx}^*$ to (\ref{cw20}), we obtain\footnote{
of course, the r.~h.~s.~term $\delta\Pi_y(x)$ is effectively absent due
to the derivative $\partial_1^2$}
\begin{align}\label{cw25}
\lefteqn{\delta\Pi_y^{-}(x)-(\delta\Gamma_{yx}^*)\Pi_x^{-}(x)}\nonumber\\
&=\sum_{k\ge 0}\mathsf{z}_k\Pi_y^k(x)\partial_1^2\big(\delta\Pi_y-\delta\Pi_y(x)
-(\delta\Gamma_{yx}^*)\Pi_x\big)(x)+\delta\xi_\tau(x)\mathsf{1}.
\end{align}
Since by (\ref{cw26}), $\delta\Gamma_{yx}^*$ is strictly triangular, (\ref{cw25})
provides an inductive way of determining $\{\Pi_x^{-}\}_x$ (up to expectation)
in terms of $\{\Pi_x\}_x$. Here comes the argument for (\ref{cw25}): Applying
$\Gamma_{yx}^*$ to the l.~h.~s.~of (\ref{cw20}) and using (\ref{cw28}) in
conjunction with Leibniz' rule w.~r.~t.~$\delta$,
we obtain the l.~h.~s.~of (\ref{cw25}). For the r.~h.~s.~we first use
the multiplicativity of $\Gamma_{yx}^*$; according to the second item in (\ref{cw27}) 
and (\ref{ao13}) we have 
\begin{align}\label{cw36}
\Gamma_{yx}^*\mathsf{z}_0=\sum_{l\ge 0}\Pi_y^l(x)\mathsf{z}_l.
\end{align}
To rewrite $\Gamma_{yx}^*\delta\Pi_x$, we apply
$\delta$ to the first identity in (\ref{cw27}). This establishes (\ref{cw25}).

%\medskip

We now argue that from an analytical point of view, (\ref{cw25}) is not quite adequate.
Clearly, the r.~h.~s.~of (\ref{cw25}) still contains a potentially singular product
of $\Pi_y^k$ and $\partial_1^2(\delta\Pi_y-\delta\Pi_y(x)$ 
$-(\delta\Gamma_{yx}^*)\Pi_x)$. Here, it is crucial that applying $\delta$ to 
$\Pi_y$, which is a multi-linear expression in $\xi$, means replacing one of the
instances of $\xi$ by $\delta\xi$. Now as we shall explain in the next subsection,
$\delta\xi$ gains\footnote{however on an $L^2$ instead of a uniform scale} 
$\frac{D}{2}$ orders  of regularity over $\xi$. However, since the other instances
of $\xi$ remain, the regularity of $\delta\Pi_y$ is not at face value better by
$\frac{D}{2}$ orders over $\Pi_y$, which is just H\"older continuous with exponent $\alpha$. 
Hence we can only expect that $\delta\Pi_y$ is locally, i.~e.~near a base point $x$, 
described -- ``modelled'' in the jargon of regularity structures -- to order
$\frac{D}{2}+\alpha$ in terms of $\Pi_x$. The Taylor-remainder-like expression 
$\delta\Pi_y-\delta\Pi_y(x)$ $-(\delta\Gamma_{yx}^*)\Pi_x$ has the potential of 
expressing this modeledness.
Hence the product of $\Pi_y^k$ and $\partial_1^2(\delta\Pi_y-\delta\Pi_y(x)$ 
$-(\delta\Gamma_{yx}^*)\Pi_x)$ has a chance of being well-defined 
provided $\alpha+(\frac{D}{2}+\alpha-2)>0$, which gives rise to the
lower bound assumption $\alpha>1-\frac{D}{4}$ in Theorem \ref{thm},
which reduces to\footnote{This is the analogy of rough path 
construction of fractional Brownian motion. For the case of fractional Brownian motion 
with Hurst parameter $H$, a rough path construction can be only implemented for any $H>\frac 14$ 
by increasing the number of iterated integrals. However, the stochastic analysis 
to construct the iterated integrals fails for fractional Brownian motion of 
Hurst parameter $H \le \frac 14$. See \cite[Theorem 2]{CQ}.}
$\alpha>\frac{1}{4}$ for our $D=3$.
Since $\frac{D}{2}+\alpha>1$, this only has
a chance of working provided every $\beta$-component of $(\delta\Gamma_{yx}^*)\Pi_x$ involves 
the affine function $\Pi_{xe_{(1,0)}}=(\cdot-x)_1$. However, this contradicts the
(strict) triangularity (\ref{cw26}) for $|\beta|\le 1$. Hence $\delta\Gamma_{yx}^*$ is
not rich enough to describe all components of $\delta\Pi_y$ to the desired order near $x$.

%\medskip

In view of the preceding discussion, we are forced to loosen the population
constraint (\ref{cw26}). To this purpose, we replace 
the directional Malliavin derivative $\delta\Gamma_{yx}^*$
by some ${\rm d}\Gamma_{yx}^*\in{\rm End}(\mathbb{R}[[\mathsf{z}_k,\mathsf{z}_{\bf n}]])$  
in order to achieve
\begin{align}\label{cw39}
\delta\Pi_y-\delta\Pi_y(x)-{\rm d}\Gamma_{yx}^*\Pi_x=O(|\cdot-x|^{\frac{D}{2}+\alpha}).
\end{align}
In order to preserve the identity (\ref{cw25}) in form of 
\begin{align}\label{cw29}
\lefteqn{\delta\Pi_y^{-}(x)-{\rm d}\Gamma_{yx}^*\Pi_x^{-}(x)}\nonumber\\
&=\sum_{k\ge 0}\mathsf{z}_k\Pi_y^k(x)\partial_1^2\big(\delta\Pi_y-\delta\Pi_y(x)
-{\rm d}\Gamma_{yx}^*\Pi_x\big)(x)+\delta\xi_\tau(x)\mathsf{1},
\end{align}
we need ${\rm d}\Gamma_{yx}^*$ to inherit the algebraic properties of $\delta\Gamma_{yx}^*$.
More precisely, we impose that ${\rm d}\Gamma_{yx}^*$ agrees with $\delta\Gamma_{yx}^*$
on the sub-algebra $\mathbb{R}[[\mathsf{z}_k]]$,
\begin{align}\label{cw30}
{\rm d}\Gamma_{yx}^*=\delta\Gamma_{yx}^*\;\mbox{on}\;\mathbb{R}[[\mathsf{z}_k]],
\end{align}
and that ${\rm d}\Gamma_{yx}^*$ 
is in the tangent space to the manifold of {\it algebra} morphisms in
$\Gamma_{yx}^*$, which means that for all $\pi,\pi'$ 
$\in\mathbb{R}[[\mathsf{z}_k,\mathsf{z}_{\bf n}]]$
\begin{align}\label{cw31}
{\rm d}\Gamma_{yx}^*\pi\pi'=({\rm d}\Gamma_{yx}^*\pi)(\Gamma_{yx}^*\pi')
+(\Gamma_{yx}^*\pi)({\rm d}\Gamma_{yx}^*\pi').
\end{align}
Here is the argument on how to pass from (\ref{cw30}) \& (\ref{cw31}) to (\ref{cw29}).
On the one hand, we apply $\delta$ to (\ref{ao49}) to the effect of
\begin{align}\label{cw33}
\delta\Pi_y^{-}(x)&=\sum_{k\ge 0}\mathsf{z}_k\delta\big(\Pi_y^{k}(x)\big)\partial_1^2\Pi_y(x)
+\sum_{k\ge 0}\mathsf{z}_k\Pi_y^k(x)\partial_1^2\delta\Pi_y(x)\nonumber\\
&-\sum_{l\ge 0}\frac{1}{l!}\delta\big(\Pi_y^{l}(x)\big)(D^{({\bf 0})})^lc
+\delta\xi_\tau(x)\mathsf{1}.
\end{align}
On the other hand, we apply ${\rm d}\Gamma_{yx}^*$ to (\ref{cw32}) to obtain by\footnote{which
also implies ${\rm d}\Gamma_{yx}^*\mathsf{1}=0$} (\ref{cw31})
\begin{align}\label{cw34}
\lefteqn{{\rm d}\Gamma_{yx}^*\Pi_x^{-}(x)}\nonumber\\
&=({\rm d}\Gamma_{yx}^*\mathsf{z}_0)\partial_1^2\Gamma_{yx}^*\Pi_x(x)
+(\Gamma_{yx}^*\mathsf{z}_0)\partial_1^2{\rm d}\Gamma_{yx}^*\Pi_x(x)
-{\rm d}\Gamma_{yx}^*c.
\end{align}
We now argue
that the first r.~h.~s.~term of (\ref{cw33}) is identical to the one in (\ref{cw34});
indeed, by the first item in (\ref{cw27}) we have $\partial_1^2\Gamma_{yx}^*\Pi_x$
$=\partial_1^2\Pi_y$. On the other hand, by (\ref{cw30}) and the second item
in (\ref{cw27}) we have
\begin{align}\label{cw35}
{\rm d}\Gamma_{yx}^*=\sum_{l\ge 0}\frac{1}{l!}\delta\big(\Pi_y^l(x)\big)(D^{({\rm 0})})^l
\quad\mbox{on}\;\mathbb{R}[[\mathsf{z}_k]].
\end{align}
so that by (\ref{ao13}) ${\rm d}\Gamma_{yx}^*\mathsf{z}_0$
$=\sum_{k\ge 0}\delta(\Pi_y^k(x))\mathsf{z}_k$. Identity (\ref{cw35}) also implies
that the third r.~h.~s.~terms of (\ref{cw33}) and (\ref{cw34}) are identical.
The second r.~h.~s.~terms of (\ref{cw33}) and (\ref{cw34}) combine as desired by (\ref{cw36}).
This establishes (\ref{cw29}). In order to use (\ref{cw29}) inductively to define
-- or rather estimate -- $\{\Pi_x^{-}\}_x$, \cite{LOTT} had to come up with an
ordering on multi-indices $\beta$ with respect to which ${\rm d}\Gamma_{yx}^*$ is
strictly triangular, leading to a modification of (\ref{ks04}). 

Incidentally, the point of view adopted in \cite{BOTT} allows for 
a more geometric interpretation of ${\rm d}\Gamma^*_{yx}$: 
The linear combination $\delta\Pi_y(x)+{\rm d}\Gamma^*_{yx}\Pi_x$ is actually 
an element of the tangent space of the solution manifold of \eqref{ao27}; 
The Malliavin derivative $\delta\Pi_y$ lies approximately in the aforementioned tangent space, 
which is expressed by \eqref{cw39}. 
For more details we invite the reader to have a look at \cite{BOTT}. 

\medskip
%%%%%%%%%%%%%%%%%%%%%%%%%%%%%%%%%%%%%%%%%%%%%%%%%%%%%%%%%%%%%%%%%%%%%%%%%%%%%%%%%%%%%%%%
\noindent
{\sc Definition of the Malliavin derivative and SG}.
We have seen that the Malliavin derivative, which we now shall rigorously
define, allows to give a more
robust relation between $\Pi_x$ and $\Pi_x^{-}$. Via the SG inequality, which will be introduced
here, the control of the Malliavin derivative of a random variable $F$ yields control of the variance
of $F$.
Consider the Hilbert norm on (a subspace of) the space of Schwartz distributions\footnote{we
denote the argument by $\delta\xi$ since we think of it as an infinitesimal
perturbation.}
\begin{align}
\|\delta\xi\|^2=\int_{\mathbb{R}^2}dx\big((\partial_1^4-\partial_2^2)
^{\frac{1}{4}(\alpha-\frac{1}{2})}\delta\xi\big)^2
=\int_{\mathbb{R}^2}dq\big||q|^{(\alpha-\frac{1}{2})}\mathcal{F}\delta\xi\big|^2.
\label{Hilbert}
\end{align}
Note that we encounter again $A^*A=(-\partial_2-\partial_1^2)(\partial_2-\partial_1^2)$
with Fourier symbol $|q|^4=q_1^4+q_2^2$, see (\ref{ao79}).
Hence this is one of the equivalent ways of defining
the homogeneous $L^2(\mathbb{R}^2)$-based Sobolev norm of fractional
order $\alpha-\frac{1}{2}$, however of parabolic scaling, which we nevertheless still
denote by $H:=\dot H^{\alpha-\frac{1}{2}}(\mathbb{R}^2)$.

%\medskip

We now consider ``cylindrical'' (nonlinear) functionals $F$ on the space 
${\mathcal S}'(\mathbb{R}^2)$
of Schwartz distributions, by which one means that for some $N\in\mathbb{N}$,
$F$ is of the form
\begin{align}\label{as30}
&F[\xi]=f\big((\xi,\zeta_1),\cdots,(\xi,\zeta_N)\big)\quad\mbox{with}\nonumber\\
&f\in C^\infty(\mathbb{R}^N)\;\mbox{and}\;\zeta_1,\cdots,\zeta_N\in{\mathcal S}(\mathbb{R}^2),
\end{align}
where we recall that
$(\xi,\zeta_n)$ denotes the natural pairing between $\xi\in{\mathcal S}'(\mathbb{R}^2)$
and a Schwartz function $\zeta_n\in{\mathcal S}(\mathbb{R}^2)$.
Clearly, those function(al)s $F$ are Fr\'echet differentiable with
\begin{align}\label{as31}
{\rm d}F[\xi].\delta\xi&=\lim_{s\downarrow 0}\frac{1}{s}(F[\xi+s\delta\xi]-F[\xi])\nonumber\\
&=\sum_{n=1}^N\partial_nf\big((\xi,\zeta_1),\cdots,(\xi,\zeta_N)\big)\,(\delta\xi,\zeta_n)
=(\delta\xi,\frac{\partial F}{\partial\xi}[\xi]),
\end{align}
where $\frac{\partial F}{\partial\xi}[\xi]\in{\mathcal S}(\mathbb{R}^2)$ is defined through
\begin{align*}
\frac{\partial F}{\partial\xi}[\xi]
=\sum_{n=1}^N\partial_nf\big((\xi,\zeta_1),\cdots,(\xi,\zeta_N)\big)\,\zeta_n.
\end{align*}
We will monitor the dual norm
\begin{align}\label{as32}
\|\frac{\partial F}{\partial\xi}[\xi]\|_*:=\sup_{\delta\xi}
\frac{(\delta\xi,\frac{\partial F}{\partial\xi}[\xi])}{\|\delta\xi\|}=
\|\frac{\partial F}{\partial\xi}[\xi]\|_{\dot H^{\frac{1}{2}-\alpha}(\mathbb{R}^2)}.
\end{align}

\begin{definition}
An ensemble $\mathbb{E}$ of Schwartz distributions\footnote{ 
It does not have to be a Gaussian ensemble.} is said to satisfy a SG inequality
provided for all cylindrical $F$ with $\mathbb{E}|F|<\infty$
\begin{align}\label{cw17}
\mathbb{E}(F-\mathbb{E}F)^2\le\mathbb{E}\|\frac{\partial F}{\partial\xi}\|_*^2.
\end{align}
\end{definition}

Note that the l.~h.~s.~of (\ref{cw17}) is the variance of $F$.
Inequality (\ref{cw17}) amounts to an $L^2$-based Poincar\'e inequality with
mean value zero on the (infinite-dimensional) space of all $\xi$'s.
By a (parabolic) rescaling of $x$, we may w.~l.~o.~g.~assume that the constant
in (\ref{cw17}) is unity.
Implicitly, we also include closability of the linear operator
\begin{align}\label{cw18}
\mbox{cylindrical function}\;F\;\mapsto\;\frac{\partial F}{\partial\xi}
\in\{\mbox{cylindrical functions}\}\otimes{\mathcal S}(\mathbb{R}^2).
\end{align}
This means that the closure of the graph of (\ref{cw18}) w.~r.~t.~the topology
of $\mathbb{L}^2$ and $\mathbb{L}^2(H^*)$ is still a graph. This allows to
extend the Fr\'echet derivative (\ref{cw18}) to the Malliavin derivative
\begin{align*}
\mathbb{L}^2\supset{\mathcal D}(\frac{\partial}{\partial\xi})\ni F
\;\mapsto\;\frac{\partial F}{\partial\xi}\in\mathbb{L}^2(H^*).
\end{align*}
By the chain rule, we may post-process (\ref{cw17}) to its $\mathbb{L}^p$-version
\begin{align}
\mathbb{E}^\frac{1}{p}|F-\mathbb{E}F|^p\lesssim_p\mathbb{E}^\frac{1}{p}
\|\frac{\partial F}{\partial\xi}\|_*^p,
\label{cw18a}
\end{align}
which is the form we use it in. A concise proof how to obtain \eqref{cw18a} from 
\eqref{cw17} can be found in \cite[Step 2 in the proof of Lemma 3.1]{JO}.

%\medskip

The obvious examples are Gaussian ensembles of Schwartz distributions with
\begin{align}
\|\cdot\|\le\mbox{Cameron-Martin norm},
\label{CM}
\end{align}
where the norm $\| \cdot \|$ means the Hilbert norm defined in \eqref{Hilbert},
e.~g.~
\begin{align*}
\begin{array}{lrcl}
\mbox{white noise}&-\frac{D}{2}&=\alpha-2&\Longrightarrow\;\alpha=\frac{1}{2},\\[1ex]
\mbox{free field}&1-\frac{D}{2}&=\alpha-2&\Longrightarrow\;\alpha=\frac{3}{2}.
\end{array}
\end{align*}
In other words, the SG inequality \eqref{cw17} holds with Gaussian ensembles satisfying \eqref{CM}, see \cite[Theorem 5.5.11]{bogachev}.

%\medskip

For the reader's convenience, we sketch the simplest application of SG from 
\cite[Section 4.3]{LOTT}, namely (\ref{cw02}) for $\beta=0$.
To this aim we apply (\ref{cw18a}) to $F:=(\xi,\psi_t(y-\cdot))=\Pi_{x0t}^{-}(y)$,
which is of the form of (\ref{as30}), so that according to (\ref{as31})
its Malliavin derivative is given by $\frac{\partial F}{\partial\xi}=\psi_t(y-\cdot)$.
In view of (\ref{as32}), and then appealing to (\ref{ao37})
in conjunction with the translation invariance and scaling of the Sobolev norm we have
\begin{align*}
\|\frac{\partial F}{\partial\xi}\|_*
=\|\psi_t(y-\cdot)\|_{\dot H^{\frac{1}{2}-\alpha}(\mathbb{R}^2)}
=(\sqrt[4]{t})^{-\frac{D}{2}-\frac{1}{2}+\alpha}
\|\psi_{t=1}\|_{\dot H^{\frac{1}{2}-\alpha}(\mathbb{R}^2)}.
\end{align*}
Noting that the exponent is $\alpha-2$ and that $\psi_{t=1}$ is a (deterministic) Schwartz function
we obtain from (\ref{cw18a})
\begin{align*}
\mathbb{E}^\frac{1}{p}|\Pi_{x0t}^{-}(y)|^p\lesssim(\sqrt[4]{t})^{\alpha-2}.
\end{align*}
In view of $|0|=\alpha$, this amounts to the desired (\ref{cw02}) for $\beta=0$.

%\medskip

We also remark that SG naturally complements the BPHZ-choice of renormalization,
see Section \ref{sec:main}:
\begin{itemize}
\item The choice of $c_\beta$ takes care of the mean $\mathbb{E}\Pi_{x\beta t}^{-}(y)$, while
\item SG takes care of the variance of $\Pi_{x\beta t}^{-}(y)$.
\end{itemize}
Hence the main task in \cite{LOTT} is the estimate of
$\mathbb{E}^\frac{1}{p}\|\frac{\partial F}{\partial\xi}\|_*^p$, where $F:=\Pi_{x\beta t}^{-}(y)$,
which we tackle by duality through estimating the directional derivative
\begin{align*}
\delta F:=(\delta\xi,\frac{\partial F}{\partial\xi})\quad\mbox{given control of}\;
\mathbb{E}^\frac{1}{q}\|\delta\xi\|^q.
\end{align*}
The inductive estimate is based on (\ref{cw29}). 

%\medskip

Philosophically speaking, our approach is analytic rather than combinatorial:

\begin{center}
\small
\begin{tabular}{r|l|l}
 & \multicolumn{1}{c|}{analytic} & \multicolumn{1}{c}{combinatorial} \\
 \hline
index set: & derivatives w.r.t. $a$ and $p$ & Picard iteration \\
 & $\leadsto$ multi-indices on $k\geq0$, $\n\neq\0$ & $\leadsto$ trees with decorations \\

 \hline
Ass. on $\xi$: & spectral gap inequality & cumulant bounds \\
 & Malliavin derivatives w.r.t. $\xi$ & trees with paired nodes \\
 & $\leadsto$ estimates on $\mathbb{E}\|\frac{\partial}{\partial\xi}\Pi_{x\beta\,t}^{-}(y)\|_*^2$ & $\leadsto$ Feynman diagrams 
\end{tabular}
\end{center}
%
%\medskip
For us, all combinatorics are contained in Leibniz' rule. We also point out that our approach 
may be called ``top-down" rather than bottom-up in the sense that we postulate 
the conditions (space-time translation, spatial reflection, shift-covariance, etc) 
on the counter term $h$ from the beginning.

%\medskip

A closing remark for experts in QFT:
The absence of $c$ in (\ref{cw29}) means that our approach does not suffer from
the well-known difficulty of ``overlapping sub-divergences'' in Quantum Field Theory,
which is also an issue in \cite{ch16}. Our inductive approach has similarities
with the one of Epstein-Glaser, see \cite[Section 3.1]{scharf}.

%%%%%%%%%%%%%%%%%%%%%%%%%%%%%%%%%%%%%%%%%%%%%%%%%%%%%%%%%%%%%%%%%%%%%%%%%%%%%
%%%%%%%%%%%%%%%%%%%%%%%%%%%%%%%%%%%%%%%%%%%%%%%%%%%%%%%%%%%%%%%%%%%%%%%%%%%%%

\section{The structure group and the re-expansion map}\label{sec:Gamma}

\noindent
In this section we construct the endomorphism $\Gamma^*_{yx}$ of the algebra
$\mathbb{R}[[\mathsf{z}_k,\mathsf{z}_{\bf n}]]$ that satisfies \eqref{cw27} 
for given $\Pi_x$ and $\Pi_y$. In \cite{LOTT}, the constructions (and estimates)
of $\Gamma_{yx}^*$ and $\Pi_x$ are actually intertwined, however the proof
of Lemma \ref{choice} has the same elements as \cite[Section 5.3]{LOTT}. 
In line with regularity structures it is convenient to adopt 
a more abstract point of view:
We start by introducing what can be assimilated
to Hairer's structure group $\mathsf{G}$, which here is a subgroup of the automorphism group
of the linear space $\mathbb{R}[\mathsf{z}_k,\mathsf{z}_{\bf n}]$, where 
$\mathbb{R}[\mathsf{z}_k,\mathsf{z}_{\bf n}]$ now plays the role
of the\footnote{canonical w.~r.~t.~the monomial basis} 
(algebraic) pre-dual of $\mathbb{R}[[\mathsf{z}_k,\mathsf{z}_{\bf n}]]$;
$\Gamma^*_{yx}$ will be the transpose of a $\Gamma_{yx}\in\mathsf{G}$.
The elements $\Gamma\in\mathsf{G}$ are parameterized by 
$\{\pi^{({\bf n})}\}_{\bf n}$ $\subset\mathbb{R}[[\mathsf{z}_k,\mathsf{z}_{\bf n}]]$,
see Lemma~\ref{LEM:def}; the group property will be established in 
Lemma~\ref{lem:group}. In Lemma \ref{choice} we inductively choose
$\{\pi^{({\bf n})}_{yx}\}_{\bf n}$ such that the associated $\Gamma_{yx}$}
satisfies \eqref{cw27}. 
For a discussion of the Hopf- and Lie-algebraic structure underlying $\mathsf{G}$
we refer to \cite{LOT}. As opposed to \cite{LOT} and \cite{LO}, we will
capitalize on $\alpha<1$, which simplifies several arguments.

\begin{lemma}\label{LEM:def}
Given\footnote{which here as opposed to earlier includes the additional (dummy) 
index ${\bf n}={\bf 0}$ we first encountered in (\ref{ao06})} $\{\pi^{({\bf n})}\}_{\bf n}$ $\subset\mathbb{R}[[\mathsf{z}_k,\mathsf{z}_{\bf n}]]$ 
satisfying
\begin{align}
\pi^{({\bf n})}_\beta&=0\quad\mbox{unless}\quad|{\bf n}|<|\beta|,\label{def1}
\end{align}
there exists a unique linear endomorphism $\Gamma$ of 
$\mathbb{R}[\mathsf{z}_k,\mathsf{z}_{\bf n}]$
such that $\Gamma^*$ is an algebra endomorphism\footnote{i.~e.~$\Gamma^*\pi\pi'$
$=(\Gamma^*\pi)(\Gamma^*\pi')$ and $\Gamma^*\mathsf{1}=\mathsf{1}$ hold} of
$\mathbb{R}[[\mathsf{z}_k,\mathsf{z}_{\bf n}]]$ that satisfies
\begin{align}
\Gamma^*\mathsf{z}_k&=\sum_{l\ge 0}\frac{1}{l!}\,(\pi^{({\bf 0})})^l (D^{(\0)})^l \z_k
\overset{\eqref{ao13}}{=}\sum_{l\ge 0}\tbinom{k+l}{k}
(\pi^{({\bf 0})})^l\mathsf{z}_{k+l},\label{zk}\\
\Gamma^*\mathsf{z}_{\bf n}&=\mathsf{z}_{\bf n}+\pi^{({\bf n})}.\label{zn}
\end{align}
In addition\footnote{we recall that $\prec$ is defined in \eqref{ks04}}, 
\begin{equation}\label{ks06}
(\Gamma^*-{\rm id})_{\beta}^{\gamma}=0\quad\mbox{unless}\quad|\gamma|<|\beta|
\;\;\mbox{and}\;\;\gamma\prec\beta.
\end{equation}
\end{lemma}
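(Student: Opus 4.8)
The statement asserts existence and uniqueness of a linear endomorphism $\Gamma$ of $\mathbb{R}[\mathsf{z}_k,\mathsf{z}_{\bf n}]$ whose transpose $\Gamma^*$ is an algebra endomorphism of $\mathbb{R}[[\mathsf{z}_k,\mathsf{z}_{\bf n}]]$ prescribed on the generators $\mathsf{z}_k$ and $\mathsf{z}_{\bf n}$ by \eqref{zk}--\eqref{zn}, together with the triangularity \eqref{ks06}. The plan is to build $\Gamma^*$ directly as the unique multiplicative extension of its values on generators, and then show that this formula descends to a \emph{finite} (hence well-defined on the pre-dual) linear map, i.e.\ that $\Gamma$ exists as claimed. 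The key structural point making everything work is the triangularity hypothesis \eqref{def1}, which will propagate through products.

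First I would note that an algebra endomorphism $\Gamma^*$ of $\mathbb{R}[[\mathsf{z}_k,\mathsf{z}_{\bf n}]]$ is determined by its values on the generators, since $\Gamma^*\mathsf{z}^\beta = \prod_{k\ge 0}(\Gamma^*\mathsf{z}_k)^{\beta(k)}\prod_{{\bf n}\neq{\bf 0}}(\Gamma^*\mathsf{z}_{\bf n})^{\beta({\bf n})}$; conversely \emph{any} assignment of generators extends uniquely to an algebra endomorphism of the formal power series algebra provided the resulting products are componentwise finite. So the content is: (i) the prescribed values \eqref{zk}--\eqref{zn} make the monomial images $\Gamma^*\mathsf{z}^\beta$ well-defined elements of $\mathbb{R}[[\mathsf{z}_k,\mathsf{z}_{\bf n}]]$ (the infinite sum in \eqref{zk} must make sense, and the infinite product over all generators in a monomial must make sense), and (ii) the $\gamma$-component of $\Gamma^*\mathsf{z}^\beta$ depends on only finitely many $\beta$'s — equivalently $(\Gamma^*)_\beta^\gamma = 0$ for all but finitely many $\beta$ given $\gamma$ — which is precisely what it means for the transpose $\Gamma$ to be a genuine linear endomorphism of the pre-dual $\mathbb{R}[\mathsf{z}_k,\mathsf{z}_{\bf n}]$. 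Both (i) and (ii) should follow from a homogeneity/triangularity bookkeeping. For \eqref{zk}, the sum $\sum_l \binom{k+l}{k}(\pi^{({\bf 0})})^l\mathsf{z}_{k+l}$ is componentwise finite because, by \eqref{def1} applied with ${\bf n}={\bf 0}$, $\pi^{({\bf 0})}_\gamma = 0$ unless $|\gamma| > 0$; combined with $|\mathsf{z}_{k+l}| = $ (homogeneity of the generator) this forces, for a fixed target multi-index, only finitely many $l$ to contribute — this is the same mechanism as in the proof of Lemma~\ref{lem:alg}. More precisely I would track the homogeneity $|\cdot|$ from \eqref{cw24}: since each factor $\pi^{({\bf n})}$ contributes only $\gamma$-components with $|\gamma| > |{\bf n}| = |e_{\bf n}|$, replacing $\mathsf{z}_{\bf n}$ by $\mathsf{z}_{\bf n} + \pi^{({\bf n})}$ and $\mathsf{z}_k$ by its expansion \eqref{zk} can only \emph{increase} homogeneity, with the "diagonal" term reproducing $\mathsf{z}^\beta$ itself; additivity of $|\cdot|$ under multiplication then gives $(\Gamma^*)_\beta^\gamma = 0$ unless $|\gamma| \ge |\beta|$, and equality $|\gamma|=|\beta|$ forces $\gamma = \beta$. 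Since by Lemma~\ref{lem:alg} (population condition \eqref{cw03}) and the finiteness already available, for fixed $\gamma$ there are only finitely many $\beta$ with $|\beta| \le |\gamma|$ in the relevant population, assertion (ii) follows.

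Next, uniqueness: any $\Gamma$ with the stated properties must have $\Gamma^*$ equal to the multiplicative extension just constructed, since $\Gamma^*$ is forced on generators and on $\mathsf{1}$, hence on all monomials, hence (by linearity and componentwise convergence) everywhere. Finally, the refined triangularity \eqref{ks06}: the homogeneity bound $|\gamma| < |\beta|$ off the diagonal I will have already obtained; the additional statement $\gamma \prec \beta$ requires tracking the ordering $|\cdot|_\prec$ from \eqref{ks04}, i.e.\ also the $\beta(0)$-count. Here I would argue exactly as in the verification of \eqref{ks05} in Section~\ref{sec:main}: in \eqref{zk} the substitution $\mathsf{z}_k \mapsto \mathsf{z}_{k+l}$ decreases (weakly) the relevant $\beta(0)$-contribution when $k \ge 1$ and $l \ge 1$, and the $\pi^{({\bf 0})}$ factors only add homogeneity while their $\mathsf{z}_0$-degree is controlled; for $k=0$, $\Gamma^*\mathsf{z}_0 = \sum_l \pi^{({\bf 0})l}\mathsf{z}_l$ (cf.\ \eqref{cw36}), where the diagonal $l=0$ term is $\mathsf{z}_0$ and the $l\ge 1$ terms have strictly larger homogeneity, so $\prec$ is respected. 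Combining the $|\cdot|$-decrease off-diagonal with the $\beta(0)$-bookkeeping yields $\gamma \prec \beta$.

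\textbf{Main obstacle.} The genuinely delicate point is the finiteness claim (ii) — that $\Gamma$ really lands in $\mathrm{End}(\mathbb{R}[\mathsf{z}_k,\mathsf{z}_{\bf n}])$ and not merely in $\mathrm{End}(\mathbb{R}[[\mathsf{z}_k,\mathsf{z}_{\bf n}]])$. A monomial $\mathsf{z}^\beta$ can involve arbitrarily high generators $\mathsf{z}_k$, each of which expands via \eqref{zk} into an infinite sum, so $\Gamma^*\mathsf{z}^\beta$ is a priori an infinite product of infinite sums; one must show the $\gamma$-component stabilizes and that dually only finitely many $\beta$ feed a given $\gamma$. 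This is exactly where the coercivity of the ordering $\prec$ (as opposed to mere homogeneity) and the assumption $\alpha < 1$ enter — the homogeneity alone is not coercive, so I expect to route the argument through $|\cdot|_\prec$ and the finiteness of $\{\gamma : \gamma \prec \beta\}$, mirroring the role that triangularity plays in Lemma~\ref{lem:alg}. Once this bookkeeping is set up cleanly, the multiplicativity, uniqueness, and \eqref{ks06} are essentially formal.
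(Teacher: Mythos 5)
Your proposal follows the same skeleton as the paper's proof — uniqueness from multiplicativity on generators, existence via componentwise finiteness of the matrix, triangularity via homogeneity bookkeeping — but contains a genuine gap that you yourself partially diagnose in your closing paragraph without actually repairing it.

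The problematic step is your claim that ``for fixed $\gamma$ there are only finitely many $\beta$ with $|\beta|\le|\gamma|$ in the relevant population, [hence] assertion (ii) follows.'' This is false: the homogeneity $|\cdot|$ from \eqref{cw24} is \emph{not} coercive. Explicitly, from \eqref{as29} one reads off $|\beta|-\alpha=\alpha\sum_{k\ge 1}k\beta(k)+\sum_{\bf n}(|{\bf n}|-\alpha)\beta({\bf n})$, which has coefficient zero on $\beta(0)$; consequently the multi-indices $\beta=ne_0$ for $n=0,1,2,\dots$ all have $|\beta|=\alpha$, are all populated in the sense of \eqref{cw03} (they have $[\beta]=0$), and yet form an infinite family below any fixed homogeneity. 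So the homogeneity triangularity $|\gamma|<|\beta|$ alone, which is what you establish in the preceding sentences, does \emph{not} deliver the finiteness needed to define $\Gamma$ on the pre-dual. This is precisely why the lemma asserts \emph{two} triangularities in \eqref{ks06}, and why the definition \eqref{ks04} adds the $\lambda\beta(0)$ term: $|\cdot|_\prec$ has a strictly positive coefficient on every degree of freedom and is therefore coercive (this is \eqref{as09}). The paper's proof accordingly establishes the $\prec$-triangularity \eqref{as04} directly — for length-one $\gamma$ by splitting into the cases $\gamma=e_{\bf n}$ (immediate from \eqref{def1}) and $\gamma=e_k$ (where the inequality $|\beta|_\prec\ge|e_k|_\prec+(\alpha-\lambda)$ uses $0<\lambda<\alpha$ in an essential way, in particular to handle the case $k=0$ where $|e_0|_\prec=\alpha+\lambda$) — and then by induction over the length of $\gamma$ using the multiplicativity relation \eqref{decomp1} and the additivity of $|\cdot|_\prec-\alpha$. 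Only then does \eqref{G1} follow.

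You do flag exactly this issue in your ``Main obstacle'' paragraph — you note that homogeneity alone is not coercive and that you expect to route through $|\cdot|_\prec$ — but this is phrased as an expectation, not carried out, and it contradicts the earlier-claimed deduction of ``assertion (ii).'' The $\prec$-bookkeeping is not a formality that can be deferred: it is the content of the existence half of the lemma, and the careful case distinction for $\gamma=e_k$, $k=0$ versus $k\ge1$, together with $\lambda<\alpha$, is where the work sits. A smaller issue: your appeal to \eqref{def1} with ${\bf n}={\bf 0}$ to argue that the sum in \eqref{zk} is componentwise finite is vacuous, since $|\gamma|\ge\alpha>0=|{\bf 0}|$ holds for \emph{all} $\gamma$ by \eqref{as29}; the actual mechanism is the presence of the factor $e_{k+l}$ in the decomposition $\beta=e_{k+l}+\beta_1+\cdots+\beta_l$, which for fixed $\beta$ bounds $l$ (this is the paper's argument from \eqref{as02}, parallel to the finiteness discussion in the proof of Lemma~\ref{lem:alg}).
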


We remark that the algebra endomorphism property, the mapping
property (\ref{zk}), and the first triangularity in (\ref{ks06}) mimic desired
properties of $\Gamma_{yx}^*$, namely (\ref{mu}), the second item of (\ref{cw27}), 
and (\ref{cw26}), respectively.

%\medskip

\begin{proof}[Proof of Lemma \ref{LEM:def}]
We recall that the matrix representation $\{\Gamma_\gamma^\beta\}_{\beta,\gamma}$
of a linear endomorphism $\Gamma$ of $\mathbb{R}[\z_k,\z_\n]$
w.~r.~t.~the monomial basis $\{\z^\beta\}_\beta$ is given by
\begin{equation}\label{as01}
\Gamma \z^\beta = \sum_\gamma \Gamma_\gamma^\beta \z^\gamma.
\end{equation}
The algebraic dual $\Gamma^*$, as a linear endomorphism of $\mathbb{R}[[\z_k,\z_\n]]$, 
is given by\footnote{note that the sum is effectively finite,
since there are only finitely many $\gamma$ such that $\Gamma_\gamma^\beta\neq0$
since the monomial basis is an algebraic basis} 
\begin{align*}
(\Gamma^*\pi)_\beta = \sum_\gamma (\Gamma^*)_\beta^\gamma \pi_\gamma\quad\mbox{where}\quad
(\Gamma^*)_\beta^\gamma:=(\Gamma^*\mathsf{z}^\gamma)_\beta=\Gamma_\gamma^\beta.
\end{align*}
Such a $\Gamma^*$ is an algebra endomorphism if and only if
\begin{align}
(\Gamma^*)_\beta^\gamma=\sum_{\beta_1+\cdots+\beta_k=\beta}
(\Gamma^*)_{\beta_1}^{\gamma_1}\cdots(\Gamma^*)_{\beta_k}^{\gamma_k}\quad
\mbox{for}\quad\gamma=\gamma_1+\cdots+\gamma_k.
\label{decomp1}
\end{align}
This includes $\Gamma^*\mathsf{1}=\mathsf{1}$ in form of
\begin{align}\label{as28}
(\Gamma^*)_\beta^0=\delta_\beta^0
\end{align}
Since any multi-index $\gamma\not=0$ can be written as the sum of $\gamma_j$'s of length one,
we learn that an endomorphism $\Gamma$ of $\mathbb{R}[\mathsf{z}_k,\mathsf{z}_{\bf n}]$ with
multiplicative $\Gamma^*$ is determined by how $\Gamma^*$ acts on the
coordinates $\{\mathsf{z}_k\}_{k\ge 0}$ and $\{\mathsf{z}_{\bf n}\}_{{\bf n}\not={\bf 0}}$. 
This establishes the uniqueness statement.

%\medskip

For the existence, we need to establish that the numbers 
$\{(\Gamma^*)_\beta^\gamma\}_{\beta,\gamma}$ defined through
(\ref{zk}) \& (\ref{zn}) in form of
\begin{align}
(\Gamma^*)_\beta^{e_k}-\delta_{\beta}^{e_k}&=\sum_{l\ge 1}\tbinom{k+l}{k}
\sum_{e_{k+l}+\beta_1+\cdots+\beta_l=\beta}
\pi^{({\bf 0})}_{\beta_1}\cdots\pi^{({\bf 0})}_{\beta_l},\label{as02}\\
(\Gamma^*)_\beta^{e_{\bf n}}-\delta_{\beta}^{e_{\bf n}}&=\pi_{\beta}^{(\bf n)}\label{as03}
\end{align}
and extended by (\ref{decomp1}) \& (\ref{as28}) to all $\gamma$ satisfy (for fixed $\beta$)
\begin{align}
\#\{ \gamma \, | \, (\Gamma^*)_\beta^\gamma \neq 0 \}<\infty.
\label{G1}
\end{align}
Indeed, this finiteness condition allows
to define $\Gamma$ via (\ref{as01}) with $\Gamma_{\gamma}^\beta$ $:=(\Gamma^*)_\beta^\gamma$. 
Since thanks to (\ref{as29}) below in conjunction with $0<\lambda,\alpha<1$
the ordering $\prec$ is coercive, by which we mean
\begin{align}\label{as09}
\#\{ \gamma \, | \, \gamma \prec\beta \}<\infty,
\end{align}
(\ref{G1}) follows once we establish the second strict triangularity in (\ref{ks06}).

%\medskip

Hence, it remains to establish (\ref{ks06}) in form of
\begin{align}\label{as04}
(\Gamma^*)_\beta^\gamma-\delta_\beta^\gamma=0\quad\mbox{unless}\quad
|\gamma|_{\prec}<|\beta|_{\prec}\quad\mbox{and}\quad|\gamma|<|\beta|
\end{align}
for the numbers 
$\{(\Gamma^*)_\beta^\gamma\}_{\beta,\gamma}$ defined through (\ref{as02}) \& (\ref{as03})
and then extended by (\ref{decomp1}). For this purpose, we note that by definition
(\ref{cw24}) in form of
\begin{align}\label{as29}
|\beta|-\alpha=\alpha\sum_{k\ge 0}k\beta(k)+\sum_{{\bf n}\not={\bf 0}}(|{\bf n}|-\alpha)\beta({\bf n})
\end{align}
and since $\alpha\le 1\le|{\bf n}|$,
\begin{align}\label{as05}
|\cdot|-\alpha\ge 0\quad\mbox{is additive}
\quad\overset{\eqref{ks04}}{\Longrightarrow}\quad\mbox{same for}\;|\cdot|_{\prec}-\alpha.
\end{align}
We first restrict to $\gamma$'s of length one in (\ref{as04}),
and distinguish the cases $\gamma=e_{\bf n}$ and $\gamma=e_k$.
Since by (\ref{cw24}) and (\ref{ks04}) we have $|e_{\bf n}|_{\prec}$
$=|e_{\bf n}|$ $=|{\bf n}|$ and $|\beta|\le|\beta|_{\prec}$, the former case
follows directly via (\ref{as03}) from assumption (\ref{def1}). We now turn to the latter case
of $\gamma=e_k$ and to (\ref{as02}). There is a contribution to the r.~h.~s.~sum
only when there exists an $l\ge 1$ and a decomposition $\beta=e_{k+l}+\beta_1+\cdots+\beta_{l}$;
this implies 
\begin{align*}
|\beta|\ge|e_{k+l}|\overset{\eqref{cw24}}{=}|e_k|+\alpha l\ge|e_k|+\alpha
\quad\overset{\eqref{ks04}}{\Longrightarrow}\quad
|\beta|_{\prec}\ge|e_k|_{\prec}+(\alpha-\lambda),
\end{align*}
which yields the desired (\ref{as04}) because of $\alpha>\lambda,0$.

%\medskip

Finally, we need to upgrade (\ref{as04}) from $\gamma$'s of length one
to those of arbitrary length, which we do by induction in the length. The base case of zero
length, i.~e.~of $\gamma=0$, is dealt with in (\ref{as28}).
We carry out the induction step with help of (\ref{decomp1}),
writing a multi-index $\gamma=\gamma'+\gamma''$ with $\gamma',\gamma''$ of smaller length:
\begin{align}\label{as07}
(\Gamma^*)_\beta^\gamma=\sum_{\beta'+\beta''=\beta}
(\Gamma^*)_{\beta' }^{\gamma' }(\Gamma^*)_{\beta''}^{\gamma''}.
\end{align}
We learn from the induction-hypothesis version of (\ref{as04}) that the summand vanishes unless
\begin{align*}
&|\gamma'|+|\gamma''|<|\beta'|+|\beta''|
\;\mbox{and}\;|\gamma'|_{\prec}+|\gamma''|_{\prec}<|\beta'|_{\prec}+|\beta''|_{\prec}
\nonumber\\
&\mbox{or}\quad\gamma'=\beta'\;\mbox{and}\;\gamma''=\beta'';
\end{align*}
in the latter case the summand is equal to $1$.
By (\ref{as05}), the first alternative implies
$|\gamma|<|\beta|$ and $|\gamma|_{\prec}<|\beta|_{\prec}$.
The second alternative implies $\gamma=\beta$
and then holds for exactly one summand to the desired effect of
$(\Gamma^*)_\beta^\gamma=1$.
\end{proof}
%%%%%%%%%%%%%%%%%%%%%%%%%%%%%%%%%%%%%%%%%%%%%%%%%%%%%%%%%%%%%%%%%%%%%%%%%%%%%%

The two triangular properties (\ref{ks06}) from Lemma \ref{LEM:def}
allow us to establish the group property.
Furthermore, a triangular dependence (\ref{tri2}) of $\Gamma^*$ on $\pi^{({\bf n})}$ 
will play a crucial role when inductively constructing
$\pi^{({\bf n})}_{yx}$ in Lemma~\ref{choice}.

\begin{lemma}\label{lem:group}
The set $\mathsf{G}$ of all $\Gamma$ as in Lemma~\ref{LEM:def}
defines a subgroup of the automorphism group of $\mathbb{R}[\mathsf{z}_k,\mathsf{z}_{\bf n}]$.
Moreover,
\begin{align}
\mbox{for $[\gamma]\ge 0$,}\quad
(\Gamma^*)_\beta^\gamma\;\;\mbox{is independent of}\;\;\pi^{({\bf n})}_{\beta'} 
\quad\mbox{unless}\quad\beta'\prec\beta. \label{tri2}
\end{align}
\end{lemma}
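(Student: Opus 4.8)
The plan is to prove Lemma~\ref{lem:group} in two parts: first the group property of $\mathsf{G}$, then the triangular dependence \eqref{tri2}. For the group property, I would argue that $\mathsf{G}$ is a subgroup of $\mathrm{Aut}(\mathbb{R}[\mathsf{z}_k,\mathsf{z}_{\bf n}])$ by checking closure under composition and inversion. The key observation is that $\Gamma\mapsto\Gamma^*$ is a contravariant functor, so $\Gamma_1\Gamma_2$ corresponds to $(\Gamma_1\Gamma_2)^*=\Gamma_2^*\Gamma_1^*$, which is a composition of algebra endomorphisms, hence again an algebra endomorphism. Thus it suffices to verify that $\Gamma_2^*\Gamma_1^*$ has the prescribed action \eqref{zk}--\eqref{zn} for \emph{some} admissible family $\{\pi^{({\bf n})}\}_{\bf n}$ satisfying \eqref{def1}. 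Applying $\Gamma_2^*\Gamma_1^*$ to $\mathsf{z}_{\bf n}$ gives $\mathsf{z}_{\bf n}+\pi_2^{({\bf n})}+\Gamma_2^*\pi_1^{({\bf n})}$, so one reads off $\pi^{({\bf n})}=\pi_2^{({\bf n})}+\Gamma_2^*\pi_1^{({\bf n})}$; the admissibility condition \eqref{def1} for this new family follows because $\Gamma_2^*$ is strictly triangular in the sense of \eqref{ks06}, so it preserves the property of being supported on $\beta$ with $|{\bf n}|<|\beta|$ (using additivity of $|\cdot|-\alpha$ from \eqref{as05}, or rather \eqref{ks06} directly). For the $\mathsf{z}_k$ action one checks by the same substitution that the composed map still has the form \eqref{zk} with $\pi^{({\bf 0})}=\pi_2^{({\bf 0})}+\Gamma_2^*\pi_1^{({\bf 0})}$, using that $\Gamma_2^*$ is an algebra morphism commuting with the relevant binomial expansions. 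For invertibility, the strict triangularity \eqref{ks06} together with coercivity \eqref{as09} of $\prec$ means that $\Gamma^*-\mathrm{id}$ is ``nilpotent in a graded sense'': on each finite-dimensional subspace cut out by $\{\gamma\prec\beta\}$ it is strictly upper triangular, so the Neumann series $\sum_{m\ge 0}(\mathrm{id}-\Gamma^*)^m$ converges componentwise and defines $(\Gamma^*)^{-1}$, which is again an algebra morphism (inverse of an algebra morphism) and whose defining data $\{\pi^{({\bf n})}\}_{\bf n}$ inherit \eqref{def1} from \eqref{ks06}.

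For the triangular dependence \eqref{tri2}, the plan is an induction on the length of $\gamma$, mirroring the structure of the proof of Lemma~\ref{LEM:def}. For $\gamma$ of length one, there are two cases. If $\gamma=e_{\bf n}$, then by \eqref{as03} we have $(\Gamma^*)_\beta^{e_{\bf n}}-\delta_\beta^{e_{\bf n}}=\pi_\beta^{({\bf n})}$, so the dependence is only on $\pi^{({\bf n})}_\beta$ itself; but this case is excluded from \eqref{tri2} since $[e_{\bf n}]=-1<0$, so nothing needs to be shown. If $\gamma=e_k$, then by \eqref{as02}, $(\Gamma^*)_\beta^{e_k}$ depends on $\pi^{({\bf 0})}_{\beta_i}$ only for $\beta_i$ appearing in a decomposition $e_{k+l}+\beta_1+\cdots+\beta_l=\beta$ with $l\ge1$; such a $\beta_i$ satisfies $|\beta_i|\le|\beta|-|e_{k+l}|+|e_k|+\ldots$, and more to the point $\beta_i\prec\beta$ because removing the nonzero $e_{k+l}$ part strictly decreases $|\cdot|_\prec$ (here one uses $|e_{k+l}|_\prec=|e_k|_\prec+(\alpha-\lambda)\cdot(\text{something positive})+\lambda\cdot(\ldots)$, i.e.\ the computation already done in the proof of Lemma~\ref{LEM:def} showing $|\beta|_\prec\ge|e_k|_\prec+(\alpha-\lambda)$). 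So $\beta_i\prec\beta$, giving \eqref{tri2} for length-one $\gamma=e_k$.

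For the induction step, write $\gamma=\gamma'+\gamma''$ with $\gamma',\gamma''$ of strictly smaller length and use \eqref{as07}: $(\Gamma^*)_\beta^\gamma=\sum_{\beta'+\beta''=\beta}(\Gamma^*)_{\beta'}^{\gamma'}(\Gamma^*)_{\beta''}^{\gamma''}$. The subtlety is that the induction hypothesis \eqref{tri2} only applies when $[\gamma']\ge0$ and $[\gamma'']\ge0$, whereas a decomposition of a $\gamma$ with $[\gamma]\ge0$ may split it into pieces with negative $[\cdot]$. I would handle this by choosing the decomposition carefully: since $[\gamma]=\sum_k k\gamma(k)-\sum_{\bf n}\gamma({\bf n})\ge 0$, I can peel off $\gamma'$ to be either a single $e_k$ with $k\ge1$ (which has $[e_k]=k\ge1>0$) when $\gamma$ has such a component, or pair up an $e_{\bf n}$ with enough $e_k$'s to keep $[\gamma']\ge0$; if $\gamma$ consists only of $e_{\bf n}$'s then $[\gamma]<0$ unless $\gamma=0$, so this case does not arise under the hypothesis $[\gamma]\ge0$ and length $\ge 1$. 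With $[\gamma']\ge0$ and $[\gamma'']\ge 0$ both arranged, the induction hypothesis says each factor depends on $\pi^{({\bf m})}_{\beta'''}$ only through $\beta'''\prec\beta'$ or $\beta'''\prec\beta''$ respectively (or the factor is $\delta^{\gamma'}_{\beta'}$, independent of all $\pi$); since $\beta',\beta''\preceq\beta$ — indeed $|\beta'|_\prec,|\beta''|_\prec\le|\beta|_\prec$ by additivity of $|\cdot|_\prec-\alpha$ from \eqref{as05} together with $|\cdot|_\prec\ge\alpha$ — we conclude $\beta'''\prec\beta$ in all cases, which is \eqref{tri2}. The main obstacle I anticipate is precisely this bookkeeping in the induction step: ensuring that a valid decomposition with both $[\gamma']\ge0$ and $[\gamma'']\ge0$ always exists when $[\gamma]\ge0$ and $\gamma$ has length at least two, and cross-checking that the ``$\beta'\preceq\beta$'' estimates are strict where needed. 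The group-property verification should be essentially routine once the functoriality of $\Gamma\mapsto\Gamma^*$ and the Neumann-series inversion are set up.
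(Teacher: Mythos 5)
Your treatment of closure under composition mirrors the paper's: you read off the composed data $\pi^{({\bf n})}=\pi_2^{({\bf n})}+\Gamma_2^*\pi_1^{({\bf n})}$ and propagate \eqref{def1} via the triangularity \eqref{ks06}, which is exactly what the paper does. For invertibility the paper instead constructs data $\tilde\pi^{({\bf n})}$ with $\Gamma^*\tilde\pi^{({\bf n})}=-\pi^{({\bf n})}$, shows they satisfy \eqref{def1} by an induction in $\prec$, builds the candidate inverse $\tilde\Gamma\in\mathsf{G}$ from those data, and then uses the composition rule to verify $\Gamma^*\tilde\Gamma^*={\rm id}$. Your Neumann-series route does produce a linear inverse of $\Gamma^*$, but you still owe the argument that this inverse again lies in $\mathsf{G}$: concretely, you must extract $\tilde\pi^{({\bf n})}:=(\Gamma^*)^{-1}\mathsf{z}_{\bf n}-\mathsf{z}_{\bf n}$, verify \eqref{def1} for it, \emph{and} verify that $(\Gamma^*)^{-1}$ acts on $\mathbb{R}[[\mathsf{z}_k]]$ according to \eqref{zk} with $\tilde\pi^{({\bf 0})}:=(\Gamma^*)^{-1}\mathsf{z}_0-\mathsf{z}_0$. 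That last point is not automatic from multiplicativity; the paper's detour through the explicit $\tilde\pi^{({\bf n})}$ and the composition rule sidesteps it.

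The real problem is in your proof of \eqref{tri2}. You induct on the length of $\gamma$ by splitting $\gamma=\gamma'+\gamma''$ into two strictly shorter pieces and insist on arranging $[\gamma']\ge0$ \emph{and} $[\gamma'']\ge0$ so the induction hypothesis applies to both factors. This is not always possible. Take $\gamma=e_2+e_{\bf n}$, so $[\gamma]=1\ge0$ and the length is $2$: the only decompositions into two nonzero, strictly shorter pieces are $(e_2,e_{\bf n})$ and $(e_{\bf n},e_2)$, and in each of them one piece has $[\cdot]=-1<0$. ``Pairing up $e_{\bf n}$ with the $e_2$'' gives back $\gamma$ itself with a zero complement, which is not a reduction in length. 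So your inductive step cannot get started on such $\gamma$. This is not merely bookkeeping; the statement \eqref{tri2} as formulated simply does not apply to the factor $(\Gamma^*)_{\beta''}^{e_{\bf n}}$, and you would have to argue its $\pi$-dependence separately. (It can be salvaged: $(\Gamma^*)_{\beta''}^{e_{\bf n}}$ depends only on $\pi^{({\bf n})}_{\beta''}$ by \eqref{as03}, and $\beta''\prec\beta$ follows because the companion factor $(\Gamma^*)_{\beta'}^{e_k}$ forces $\beta'\ne0$ hence $|\beta'|_\prec>\alpha$ hence $|\beta''|_\prec<|\beta|_\prec$ by additivity --- but this is a different argument from what you wrote.) The paper avoids the issue entirely: it decomposes $\gamma$ with $[\gamma]\ge0$ into summands of the form \eqref{as08}, each with $[\cdot]\ge0$ by the constraint $j\le k$ (such a decomposition always exists precisely because $[\gamma]\ge0$), reduces to a single such $\gamma$, expands $(\Gamma^*)_\beta^\gamma$ all the way down to length-one pieces via \eqref{decomp1}, reads off the product \eqref{as11} and the constraint \eqref{rg32}, and then gets the strict inequality $|\beta_i'|_\prec,|\beta_i|_\prec<|\beta|_\prec$ in one stroke from $|e_{k+l}|_\prec\ge\alpha(1+k+l)$ and $j\le k$. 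You should adopt that direct expansion rather than a pairwise length induction.
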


\begin{remark}\label{rem:1}
The group $\mathsf{G}$ is larger than the one constructed in \cite{LOT},
since 1) we do not require that $\pi^{({\bf n})}_\beta=0$
unless $\beta$ satisfies \eqref{cw03}, and 2) we do not 
specify the space-time shift structure
of the $(\beta=e_{\bf m})$-components of $\pi^{({\bf n})}_\beta$
as in \cite[Proposition~5.1]{LOT}. Both conditions however
are satisfied for our construction 
of $\pi_{yx\beta}^{({\bf n})}$, see \eqref{as13} and \eqref{as25}.
\end{remark}

%\medskip

\begin{proof}[Proof of Lemma~\ref{lem:group}]
We first argue that for $\Gamma,\Gamma'\in\mathsf{G}$ we have $\Gamma'\Gamma\in\mathsf{G}$.
More precisely, if $\Gamma$ and $\Gamma'$ are associated to
$\{\pi^{({\bf n})}\}_{\bf n}$ and $\{\pi'^{({\bf n})}\}_{\bf n}$ by Lemma \ref{LEM:def}, 
respectively, we consider
\begin{align}
\widetilde{\pi}^{({\bf n})}:=\pi^{({\bf n})}+\Gamma^* \pi'^{({\bf n})}.
\label{pitilde}
\end{align}
We note that by triangularity (\ref{ks06}) of $\Gamma^*$ w.~r.~t.~$|\cdot|$, 
the population property (\ref{def1}) propagates from $\pi^{({\bf n})}$, $\pi'^{({\bf n})}$
to $\widetilde{\pi}^{({\bf n})}$.
Let $\widetilde{\Gamma}\in\mathsf{G}$ be associated to $\{\widetilde{\pi}^{({\bf n})}\}_{\bf n}$;
we claim that $\Gamma'\Gamma=\widetilde{\Gamma}$.

%\medskip

To this purpose, we note that $(\Gamma'\Gamma)^*$ $=\Gamma^*{\Gamma'}^*$
is an algebra morphism, like $\widetilde{\Gamma}^*$ is. Hence by the uniqueness statement
of Lemma \ref{LEM:def}, it is sufficient to check that $\Gamma^*{\Gamma'}^*$ and
$\widetilde{\Gamma}^*$ agree on the two sets of coordinates $\{\mathsf{z}_k\}_k$ 
and $\{\mathsf{z}_{\bf n}\}_{\bf n}$. On the latter this is easy:
\begin{align*}
{\widetilde \Gamma}^*\mathsf{z}_{{\bf n}}
&\overset{\eqref{zn}}{=}\mathsf{z}_{{\bf n}}+\widetilde{\pi}^{(\bf n)}
\overset{\eqref{pitilde}}{=}\mathsf{z}_{\bf n}+\pi^{({\bf n})}+\Gamma^* \pi'^{({\bf n})}
\overset{\eqref{zn}}{=}\Gamma^*(\mathsf{z}_{\bf n}+\pi'^{({\bf n})}) \\
&\overset{\eqref{zn}}{=}\Gamma^* \Gamma'^* \mathsf{z}_{{\bf n}}.
\end{align*}
We now turn to the $\mathsf{z}_k$'s, 
showing that the algebra endomorphisms $\Gamma^*{\Gamma'}^*$ and
$\widetilde{\Gamma}^*$ agree on the sub-algebra $\mathbb{R}[\mathsf{z}_k]$
$\subset\mathbb{R}[[\mathsf{z}_k,\mathsf{z}_{\bf n}]]$; by
multiplicativity of $\Gamma^*$ we have according to (\ref{zk}) for $\Gamma'$
\begin{align*}
\Gamma^*{\Gamma'}^*=\sum_{l'\ge 0}\frac{1}{l'!}(\Gamma^*{\pi'}^{({\bf 0})})^{l'}
\Gamma^*(D^{({\bf 0})})^{l'}\quad\mbox{on}\;\mathbb{R}[\mathsf{z}_k].
\end{align*}
Since $D^{({\bf 0})}$ preserves $\mathbb{R}[\mathsf{z}_k]$, we may apply
(\ref{zk}) for $\Gamma$ and obtain by the binomial formula:
\begin{align*}
\Gamma^*{\Gamma'}^*
&=\sum_{l'\ge 0}\frac{1}{l'!}(\Gamma^*{\pi'}^{({\bf 0})})^{l'}
\sum_{l\ge 0}\frac{1}{l!}(\pi^{({\bf 0})})^l(D^{({\bf 0})})^{l'+l}\nonumber\\
&%=\sum_{\tilde l\ge0}\frac{1}{\tilde l!}
%(\Gamma^*{\pi'}^{({\bf 0})}+\pi^{({\bf 0})})^{\tilde l}(D^{({\bf 0})})^{\tilde l}
\overset{\eqref{pitilde}}{=}\sum_{\tilde l\ge0}\frac{1}{\tilde l!}
(\widetilde{\pi}^{({\bf 0})})^{\tilde l}(D^{({\bf 0})})^{\tilde l}
\quad\mbox{on}\;\mathbb{R}[\mathsf{z}_k],
\end{align*}
which according to (\ref{zk}) agrees with $\widetilde{\Gamma}^*$.

%\medskip

We come to the inverse of a $\Gamma\in\mathsf{G}$ associated to
$\{\pi^{({\bf n})}\}_{\bf n}$.
By the strict triangularity (\ref{ks06}) 
w.~r.~t.~the coercive $\prec$, cf.~(\ref{as09}), there exists $\tilde\pi^{({\bf n})}\in
\mathbb{R}[[\mathsf{z}_k,\mathsf{z}_\n]]$ such that
\begin{align}\label{as10}
\Gamma^*\tilde\pi^{({\bf n})}=-\pi^{({\bf n})}.
\end{align}
We now argue by induction in $\beta$ w.~r.~t.~$\prec$ that $\tilde\pi^{({\bf n})}$
satisfies (\ref{def1}). For this, we spell (\ref{as10}) out as
\begin{align*}
\tilde\pi^{({\bf n})}_\beta+\sum_{\gamma}(\Gamma^*-{\rm id})_\beta^\gamma
\tilde\pi^{({\bf n})}_\gamma=-\pi^{({\bf n})}_\beta.
\end{align*}
If $|\beta|\le|{\bf n}|$, the r.~h.~s.~vanishes by (\ref{def1}), and
by (\ref{ks06}) the sum over $\gamma$ restricts to
$|\gamma|\le|\beta|\le|{\bf n}|$, and to $\gamma\prec\beta$, 
so that the summand vanishes by induction hypothesis.
Thus also $\tilde\pi^{({\bf n})}_\beta$ vanishes.

%\medskip

This allows us to argue that $\tilde\Gamma\in\mathsf{G}$ associated to 
$\{\tilde\pi^{({\bf n})}\}_{\bf n}$ is the inverse of $\Gamma$.
By the strict upper triangularity of $\Gamma$ w.~r.~t.~to the coercive $\prec$,
we already
know that $\Gamma$ is invertible, so that it suffices to show $\tilde\Gamma \Gamma={\rm id}$,
which in turn follows from its transpose $\Gamma^* \tilde\Gamma^*={\rm id}$.
By the composition rule (\ref{pitilde})
established above,  $\Gamma^*\widetilde\Gamma^*$ is associated to
$\{\pi^{({\bf n})}+\Gamma^*\widetilde\pi^{({\bf n})}\}_{\bf n}$. 
By (\ref{as10}) we have that $\pi^{({\bf n})}+\Gamma^*\widetilde\pi^{({\bf n})}=0$, 
and learn from Lemma \ref{LEM:def} that ${\rm id}$ is associated with $0$.

%\medskip

We finally turn to the proof of \eqref{tri2}. We note that 
$\beta_1+\cdots+\beta_l=\beta$ implies the componentwise $\beta_j\le\beta$,
which by (\ref{as05}) implies $|\beta_j|_{\prec}\le|\beta|_{\prec}$.
Since every $\gamma$ with $[\gamma]\ge 0$ can be
written as the sum of $\gamma$'s of the form
\begin{align}\label{as08}
\gamma=e_{k}+e_{{\bf n}_1}+\cdots+e_{{\bf n}_j}\quad
\mbox{with}\quad j\le k,
\end{align}
we learn from (\ref{decomp1}) that we may assume that $\gamma$ is of this form.
Once more by (\ref{decomp1}) we have for these $\gamma$'s
\begin{align*}
(\Gamma^*)_\beta^\gamma=\sum_{\beta_0+\cdots+\beta_j=\beta}
(\Gamma^*)_{\beta_0}^{e_k}(\Gamma^*)_{\beta_1}^{e_{{\bf n}_1}}
\cdots(\Gamma^*)_{\beta_j}^{e_{{\bf n}_j}}.
\end{align*}
From (\ref{as02}) \& (\ref{as03}) we learn that this $(\Gamma^*)_\beta^\gamma$
is a linear combination of
\begin{align}\label{as11}
\pi^{(\0)}_{\beta_1'} \cdots \pi^{(\0)}_{\beta_{l}'} 
(\z_{\n_1}+\pi^{(\n_1)}_{})_{\beta_{1}} 
\cdots (\z_{\n_j}+\pi^{(\n_j)}_{})_{\beta_{j}},
\end{align}
where the multi-indices satisfy
\begin{align}\label{rg32}
\beta=e_{k+l}+\beta_1'+\cdots+\beta_l'+\beta_1+\cdots+\beta_j.
\end{align}
We need to show that the product (\ref{as11})
contains only factors $\pi_{\beta'}^{({\bf n})}$ with $\beta'\prec\beta$;
w.~l.~o.~g.~we may assume $l+j\geq1$.
To this purpose we apply $|\cdot|_{\prec}$ to \eqref{rg32};
by (\ref{as05}) and $|e_{k+l}|_{\prec}\ge|e_{k+l}|=\alpha(1+k+l)$ this implies
$|\beta|_{\prec}\ge\alpha(1+k-j)+|\beta_1'|_{\prec}+\cdots+|\beta_l'|_{\prec}
+|\beta_1|_{\prec}+\cdots+|\beta_j|_{\prec}$, which by $j\le k$ implies the
desired $|\beta_1'|_{\prec},\dots,|\beta_l'|_{\prec},
|\beta_1|_{\prec},\dots,|\beta_j|_{\prec}<|\beta|_{\prec}$.
\end{proof}

%%%%%%%%%%%%%%%%%%%%%%%%%%%%%%%%%%%%%%%%%%%%%%%%%%%%%%%%%%%%%%%%%%%%%%%%%%%%%%%%%%%%%
%%%%%%%%%%%%%%%%%%%%%%%%%%%%%%%%%%%%%%%%%%%%%%%%%%%%%%%%%%%%%%%%%%%%%%%%%%%%%%%%%%%%%

Finally, we show that the group $\mathsf{G}$ is large enough to contain 
the re-expansion maps.

\begin{lemma}\label{choice}
There exists $\{\pi_{yx}^{({\bf n})}\}_{\bf n}$ satisfying \eqref{def1}
such that the $\Gamma_{yx}\in \mathsf{G}$ associated by Lemma \ref{LEM:def} satisfies \eqref{cw27}.
\end{lemma}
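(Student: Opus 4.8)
The plan is to build $\{\pi^{({\bf n})}_{yx}\}_{\bf n}$ by induction along the coercive order $\prec$ of \eqref{ks04}, with the dummy index ${\bf n}={\bf 0}$ treated separately. For ${\bf n}={\bf 0}$ there is no freedom: comparing \eqref{zk} with the second item of \eqref{cw27} forces $\pi^{({\bf 0})}_{yx}:=\Pi_y(x)$, the formal power series with components $\Pi_{y\beta}(x)$. This choice meets \eqref{def1}, since by \eqref{as29} one has $|\beta|\ge\alpha>0$ for every $\beta$ so that the constraint $|{\bf 0}|<|\beta|$ is vacuous, and by \eqref{cw03} its components vanish for $\beta$ with $[\beta]<0$, $\beta\ne e_{\bf m}$ -- a ``population'' property I will propagate to all $\pi^{({\bf n})}_{yx}$; correspondingly it suffices to determine the components $\pi^{({\bf n})}_{yx,\beta}$ for $[\beta]\ge0$ or $\beta=e_{\bf m}$ and to set the rest to zero, the resulting identities $(\Gamma_{yx}^*\Pi_x)_\beta=0=\Pi_{y\beta}-\Pi_{y\beta}(x)$ being automatic by the triangularity of Lemma~\ref{lem:group} together with \eqref{cw03}, as in the proof of \eqref{cw08}.

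So fix $\beta$ with $[\beta]\ge0$ or $\beta=e_{\bf m}$ and assume $(\Gamma_{yx}^*\Pi_x)_\gamma=\Pi_{y\gamma}-\Pi_{y\gamma}(x)$ for all $\gamma\prec\beta$. Using $(\Gamma^*)_\beta^\beta=1$ from \eqref{ks06}, $(\Gamma^*)_\beta^{e_{\bf m}}=\delta_\beta^{e_{\bf m}}+\pi^{({\bf m})}_{yx,\beta}$ with $\Pi_{xe_{\bf m}}=(\cdot-x)^{\bf m}$ from \eqref{as03} and \eqref{ao59}, and the fact that any remaining $\gamma$ with $(\Gamma^*)_\beta^\gamma\ne0$ and $\Pi_{x\gamma}\ne0$ satisfies $\gamma\prec\beta$, $|\gamma|<|\beta|$, $[\gamma]\ge0$ while, by \eqref{tri2}, $(\Gamma^*)_\beta^\gamma$ then depends only on $\{\pi^{({\bf n})}_{yx,\gamma'}\}_{\gamma'\prec\beta}$, the $\beta$-component of $\Gamma_{yx}^*\Pi_x-(\Pi_y-\Pi_y(x))$ equals
\[
w_\beta+\sum_{{\bf m}\ne{\bf 0}}\pi^{({\bf m})}_{yx,\beta}(\cdot-x)^{\bf m},\qquad w_\beta:=\Pi_{x\beta}+\sum_{\gamma\prec\beta,\,[\gamma]\ge0}(\Gamma_{yx}^*)_\beta^\gamma\Pi_{x\gamma}-\Pi_{y\beta}+\Pi_{y\beta}(x),
\]
with $w_\beta$ already determined by the inductive data. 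It therefore remains to show that $w_\beta$ is a (random) polynomial in $\cdot-x$ of parabolic degree $<|\beta|$ vanishing at $x$; then choosing $\pi^{({\bf m})}_{yx,\beta}$ as minus its coefficients makes the displayed expression vanish identically, and this is legitimate for \eqref{def1} since the monomials of $w_\beta$ carry $|{\bf m}|<|\beta|=|e_{\bf m}|$.

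That $w_\beta$ has this form I would read off from the Schauder--Liouville circle around Lemma~\ref{lem:int}. First $w_\beta(x)=0$, because $\Pi_{x\beta}(x)=\Pi_{x\gamma}(x)=0$ by \eqref{ks02}. Next, since $\partial_2-\partial_1^2$ commutes with the algebraic map $\Gamma_{yx}^*$ and $(\partial_2-\partial_1^2)\Pi_{\cdot\,\beta}=\Pi^-_{\cdot\,\beta}$ modulo polynomials of degree $\le|\beta|-2$ by \eqref{cw04}, one gets $(\partial_2-\partial_1^2)w_\beta=(\Gamma_{yx}^*\Pi_x^-)_\beta-\Pi_{y\beta}^-$ modulo such polynomials; since $\Pi_{xe_{\bf m}}^-=0$ (a one-line check from \eqref{ao49} and $c\in\mathbb{R}[[\mathsf{z}_k]]$), $(\Gamma_{yx}^*\Pi_x^-)_\beta$ again involves only inductive data, and the intertwining \eqref{cw28} -- whose $\beta$-component, by the triangular structure \eqref{ks05}, uses only the $\gamma\prec\beta$ instances of the inductive hypothesis together with the prescribed action of $\Gamma_{yx}^*$ on $\mathbb{R}[[\mathsf{z}_k]]$ -- gives $(\Gamma_{yx}^*\Pi_x^-)_\beta=\Pi_{y\beta}^-$, so that $(\partial_2-\partial_1^2)w_\beta$ is a polynomial of degree $\le|\beta|-2$. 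Finally \eqref{cw01}, applied to $\Pi_{x\gamma}$ and $\Pi_{y\beta}$ and combined through H\"older in probability with the finite moments of the previously constructed $\pi^{({\bf n})}_{yx,\gamma'}$, yields $\mathbb{E}^{1/p}|w_\beta(z)|^p\lesssim(1+|z-x|)^{|\beta|}$; subtracting a polynomial primitive of $(\partial_2-\partial_1^2)w_\beta$ and running the argument from the uniqueness part of the proof of Lemma~\ref{lem:int} -- the $\partial^{\bf n}$-derivative of the semigroup mollification of $w_\beta$ at scale $t$ is $t$-independent and tends to $0$ as $t\uparrow\infty$ once $|{\bf n}|>|\beta|$, using $|\beta|\notin\mathbb{Z}$ from \eqref{as17} -- forces $w_\beta$ to be a polynomial of degree $<|\beta|$, completing the induction. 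For $\beta=e_{\bf n}$, where $|\beta|\in\mathbb{Z}$ and the Liouville step is unavailable, one instead notes that $(\Gamma^*)_{e_{\bf n}}^\gamma=0$ unless $\gamma=e_{\bf m}$, so $w_{e_{\bf n}}=(\cdot-x)^{\bf n}-(\cdot-y)^{\bf n}+(x-y)^{\bf n}$ is visibly a polynomial of degree $<|{\bf n}|$ vanishing at $x$, and the recipe produces the explicit $\pi^{({\bf m})}_{yx,e_{\bf n}}=\binom{\bf n}{\bf m}(x-y)^{{\bf n}-{\bf m}}$ for ${\bf 0}<{\bf m}<{\bf n}$ (and $0$ otherwise), as one checks from the binomial theorem.

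The main obstacle is the intertwining step $(\Gamma_{yx}^*\Pi_x^-)_\beta=\Pi_{y\beta}^-$: one must verify that the purely formal identity \eqref{cw28} ``localizes'' to the already-constructed $\gamma\prec\beta$ data, i.e.\ that none of the $\Gamma_{yx}^*\mathsf{z}_k$, $\Gamma_{yx}^*(D^{({\bf 0})})^lc$ or $(\Gamma_{yx}^*\Pi_x)_\gamma$ entering the $\beta$-component of \eqref{ao49} transported by $\Gamma_{yx}^*$ needs $\gamma\succeq\beta$. This is exactly what the triangularities \eqref{ks05} and Lemma~\ref{lem:group} provide, but it has to be carried out carefully, in tandem with the bookkeeping of the population condition \eqref{cw03}; by contrast, the Liouville step itself is routine once Lemma~\ref{lem:int} is available.
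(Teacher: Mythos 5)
Your proposal follows essentially the same strategy as the paper's proof: the forced choice $\pi^{({\bf 0})}_{yx}=\Pi_y(x)$, the explicit binomial coefficients for the polynomial sector, the population ansatz mirroring \eqref{cw03}, induction along $\prec$ using the triangularities \eqref{ks06}, \eqref{tri2}, \eqref{ks05}, the re-derivation of $\Pi^-_{y\beta}=(\Gamma^*_{yx}\Pi^-_x)_\beta$ from the inductive hypothesis, the reduction via \eqref{cw04} to showing that $(\partial_2-\partial_1^2)w_\beta$ is a polynomial of degree $\le|\beta|-2$, and the Liouville step from Lemma~\ref{lem:int}. Your $w_\beta$ agrees (up to the constant $\Pi_{y\beta}(x)$ that kills the value at $x$) with the paper's $(\Pi_y-\Gamma^*_{yx}P\Pi_x)_\beta$, so the decomposition is identical.

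The one place where you wave a hand is exactly where the paper becomes semi-quantitative, and this is a genuine (if repairable) gap. To apply the Liouville part of Lemma~\ref{lem:int} you need the growth bound $\mathbb{E}|w_\beta(z)|\lesssim|z-x|^{|\beta|}$, which in turn requires moment bounds on $(\Gamma^*_{yx})_\beta^\gamma$, hence on the previously constructed $\pi^{({\bf n})}_{yx\gamma'}$. You invoke these as ``finite moments of the previously constructed $\pi^{({\bf n})}_{yx,\gamma'}$'', but since they are themselves coefficients of the random polynomials $w_{\gamma'}$ produced at earlier steps of the very same induction, you must close the loop: you have to argue that the coefficients extracted from a polynomial with a moment-bounded growth estimate are again moment bounded. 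The paper does this explicitly in the form of \eqref{ks93}--\eqref{mt94}, deduced via the ``three-point argument'' — the equivalence of norms on the finite-dimensional space of polynomials of degree $<|\beta|$, upgraded by duality to the annealed estimate at the end of its proof. Your argument is otherwise complete, and the additivity-based exponent bookkeeping that makes \eqref{ks93} follow from \eqref{mt94} you have implicitly in your use of \eqref{as05} through \eqref{tri2}; but the extraction of moments on the $\pi^{({\bf n})}_{yx\beta}$ from the growth bound on $w_\beta$ is the one step you neither state nor prove, and without it the induction does not propagate.
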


As a consequence of working with a larger group than in \cite{LOT}, 
see Remark \ref{rem:1}, we don't have uniqueness of $\{\pi_{yx}^{({\bf n})}\}_{\bf n}$
and thus of $\Gamma_{yx}$. 
We refer the reader to \cite{T} for a uniqueness result 
when working with the smaller group.
An inspection of our construction reveals transitivity 
in line with \cite[Definition 3.3]{Hai16}
\begin{equation*}
\Gamma^*_{xy}\Gamma^*_{yz}=\Gamma^*_{xz}
\quad\textnormal{and}\quad
\Gamma^*_{xx}={\rm id},
\end{equation*}
see \cite[Section 5.3]{LOTT} for the argument;
it would also be a consequence of uniqueness. 

\begin{proof}[Proof of Lemma \ref{choice}]
We start by specifying $\pi_{yx\beta}^{({\bf n})}$ in the special cases of
${\bf n}={\bf 0}$ and of $\beta=e_{\bf m}$ for some ${\bf m}\not={\bf 0}$:
\begin{align}
\pi_{yx}^{({\bf 0})}&:=\Pi_y(x),\label{as12}\\
\pi_{yxe_{\bf m}}^{({\bf n})}&:=\left\{\begin{array}{cl}
\tbinom{{\bf m}}{\bf n} (x-y)^{{\bf m}-{\bf n}}&
\mbox{provided }{\bf n}<{\bf m}, \\
0 & \textnormal{otherwise}
\end{array}\right\}\quad\mbox{for}\;{\bf n}\not={\bf 0},\label{as13}
\end{align}
where ${\bf n}<{\bf m}$ means component-wise (non-strict) ordering 
and ${\bf n}\not={\bf m}$.
We note that (\ref{as12}) is necessary in order to bring the second
item of (\ref{cw27}) into agreement with the form (\ref{zk}).
We also remark that (\ref{as13}) yields by (\ref{zn})
\begin{align*}
(\Gamma_{yx}^*)_{e_{\bf m}}^{e_{\bf n}}
=\left\{\begin{array}{cl}
\tbinom{{\bf m}}{{\bf n}}(x-y)^{{\bf m}-{\bf n}}
&\mbox{provided}\;{\bf n}\le{\bf m},\\
0&\mbox{otherwise}
\end{array}\right\}.
\end{align*}
By the second part of (\ref{ks06}), which implies $(\Gamma_{yx}^*)_0^\gamma=0$ 
unless $\gamma=0$, by (\ref{as02}) in form of $(\Gamma_{yx}^*)_{e_{\bf m}}^{e_k}=0$,
and via (\ref{decomp1}) this strengthens to
\begin{align}\label{as26}
(\Gamma_{yx}^*)_{e_{\bf m}}^{\gamma}
=\left\{\begin{array}{cl}
\tbinom{{\bf m}}{{\bf n}}(x-y)^{{\bf m}-{\bf n}}
&\mbox{if}\;\gamma=e_{\bf n}\;\mbox{with}\;{\bf n}\le{\bf m},\\
0&\mbox{otherwise}
\end{array}\right\}.
\end{align}
The latter is imposed upon us by taking the
($\beta=e_{\bf m}$)-component of the first item in (\ref{cw27}) and plugging
in (\ref{ao59}). The second part of (\ref{as26}) implies that $\Gamma_{yx}$
maps the linear span of $\{\mathsf{z}_{\bf m}\}_{{\bf m}\not={\bf 0}}$ into itself;
since this linear span can be identified with the space $\mathbb{R}[x_1,x_2]/\mathbb{R}$
of space-time polynomials (modulo constants), this can be assimilated
to Hairer's postulate \cite[Assumption 3.20]{Hai16}.
We note that (\ref{as12}) and (\ref{as13}) satisfy (\ref{def1})
because of $|\cdot|\ge\alpha>0$, cf.~(\ref{as05}), 
and $|e_{\bf m}|=|{\bf m}|>|{\bf n}|$, respectively.
In line with (\ref{cw03}) and \cite{LOT}, we also set
\begin{align}\label{as25}
\pi_{yx\beta}^{({\bf n})}=0\quad\mbox{unless}\quad[\beta]\ge 0\;\;\mbox{or}\;\;
\beta=e_{\bf m}\;\mbox{for some}\;{\bf m}\not={\bf 0}.
\end{align}
%

%\medskip

It thus remains to construct $\pi_{yx\beta}^{({\bf n})}$ for ${\bf n}\not={\bf 0}$
and $[\beta]\ge 0$, which we will do by induction in $\beta$ w.~r.~t.~$\prec$.
According to (\ref{tri2}), we may consider $(\Gamma^*)_\beta^\gamma$ as
already constructed for $[\gamma]\ge 0$. According to (\ref{ks05}) and by the induction
hypothesis (\ref{cw27}), an inspection of the argument that leads from there to (\ref{cw28})
shows that we also have
\begin{align}\label{as14}
\Pi_{y\beta}^{-}=(\Gamma^*_{yx}\Pi_x^{-})_\beta.
\end{align}
The induction step consists in choosing 
$\{\pi_{yx\beta}^{({\bf n})}\}_{0<|{\bf n}|<|\beta|}$ such that
\begin{align}\label{as15}
\Pi_{y\beta}=(\Gamma^*_{yx}\Pi_x)_\beta+\Pi_{y\beta}(x)\overset{\eqref{as12}}{=}
(\Gamma^*_{yx}\Pi_x)_\beta+\pi_{yx\beta}^{({\bf 0})}.
\end{align}
Denoting by $P$ the projection on multi-indices $\gamma$ with $[\gamma]\geq0$,
so that by (\ref{ao59}) and (\ref{cw03}) we have 
$({\rm id}-P)\Pi_x$ $=\sum_{{\bf n}\not={\bf 0}}(\cdot-x)^{\bf n}
\mathsf{z}_{\bf n}$ and thus by (\ref{def1}) and (\ref{zn})
\begin{align}\label{as18}
(\Gamma_{yx}^*(1-P)\Pi_x)_\beta=\sum_{0<|{\bf n}|<|\beta|}(\cdot-x)^{\bf n}
\pi_{{yx}\beta}^{({\bf n})},
\end{align}
allows us to make $\{\pi_{yx\beta}^{({\bf n})}\}_{0<|{\bf n}|<|\beta|}$ 
in (\ref{as15}) explicit:
\begin{align}\label{cbp1}
(\Pi_y-\Gamma^*_{yx} P \Pi_x)_\beta
=\sum_{{\bf n}:|{\bf n}|<|\beta|}\pi^{({\bf n})}_{yx\beta} (\cdot-x)^{\bf n}. 
\end{align}
Hence our task reads
\begin{align}\label{as19}
(\Pi_y-\Gamma^*_{yx} P \Pi_x)_\beta=\mbox{polynomial of degree}\;<|\beta|. 
\end{align}
According to the PDE (\ref{cw04}), to (\ref{as14}), and to (\ref{as18}) we have
\begin{align}\label{as20}
(\partial_2-\partial_1^2)(\Pi_y-\Gamma^*_{yx} P \Pi_x)_\beta
=\mbox{polynomial of degree}\;<|\beta|-2.
\end{align}

%\medskip

In order to pass from (\ref{as20}) to (\ref{as19}),
we will now appeal to the uniqueness/Liouville statement in Lemma \ref{lem:int}
with $\eta=|\beta|$, which is $\not\in\mathbb{Z}$ according to (\ref{as17})
and $\ge\alpha$ according to (\ref{as05}), and $p=1$ for simplicity. 
More precisely, we apply Lemma \ref{lem:int} to
\begin{align*}
u=(\Pi_y-\Gamma^*_{yx} P \Pi_x)_\beta-\mbox{its Taylor polynomial in $x$ of order $<|\beta|$},
\end{align*} 
which makes sense since (\ref{as20}) implies that $(\Pi_y-\Gamma^*_{yx} P \Pi_x)_\beta$ is
smooth, and to $f\equiv 0$.
Hence for the assumption (\ref{ao55}) we need to check that
\begin{equation}\label{ks01}
\limsup_{z:|z-x|\uparrow\infty}\frac{1}{|z-x|^{|\beta|}}
\mathbb{E}|(\Pi_y-\Gamma^*_{yx}P\Pi_x)_\beta(z)|<\infty,
\end{equation}
which forces us to now become semi-quantitative.

%\medskip

By the estimate (\ref{cw01}) on $\Pi$, for (\ref{ks01}) it remains to 
show\footnote{which coincides with Hairer's postulate \cite[(3.2) in Definition 3.3]{Hai16}}
\begin{align}\label{ks93}
\mathbb{E}^{\frac{1}{p}}
|(\Gamma^*_{yx})_{\beta}^\gamma|^p\lesssim_{\beta,\gamma,p}|y-x|^{|\beta|-|\gamma|}
\quad\mbox{provided}\quad[\gamma]\ge 0.
\end{align}
In line with the language of \cite{LOTT}, we split the
argument for (\ref{ks93}) into an ``algebraic argument'', where we derive (\ref{ks93}) from
\begin{equation}\label{mt94}
\mathbb{E}^\frac{1}{p}|\pi_{yx\beta'}^{({\bf n})}|^p 
\lesssim_{\beta',p}|x-y|^{|\beta'|-|{\bf n}|}\quad\mbox{for}\;\beta'\prec\beta,
\end{equation}
and a ``three-point argument'', 
where we derive (\ref{mt94}) from the estimate (\ref{cw01}) on $\Pi$.

%\medskip

Here comes the argument for \eqref{ks93}, which is modelled after the one for (\ref{tri2})
in Lemma \ref{lem:group}.
By H\"older's inequality in probability and the additivity of $|\cdot|-\alpha$, cf.~(\ref{as05}), 
we may restrict to $\gamma$'s of the form (\ref{as08}). 
We are thus lead to estimate the product (\ref{as11}), 
which now takes the form of
\begin{align}\label{rg30}
\pi^{(\0)}_{yx\beta_1'} \cdots \pi^{(\0)}_{yx\beta_{l}'}
(\z_{\n_1}+\pi_{yx}^{(\n_1)})_{\beta_{1}}
\cdots (\z_{\n_j}+\pi_{yx}^{(\n_j)})_{\beta_{j}}.
\end{align}
Once again by H\"older's inequality, we infer from (\ref{mt94}) that the 
$\mathbb{E}^\frac{1}{p}|\cdot|^p$-norm of (\ref{rg30}) is
\begin{align*}
\lesssim |y-x|^{|\beta_1'|}\cdots|y-x|^{|\beta_{l}'|}
|y-x|^{|\beta_1|-|{\bf n}_1|}\cdots|y-x|^{|\beta_{j}|-|{\bf n}_j|}.
\end{align*}
By the additivity of $|\cdot|-\alpha$, the total exponent of $|y-x|$ can be 
identified with the desired expression:
\begin{align*}
\lefteqn{|\beta_1'|+\cdots+|\beta_l'|+(|\beta_1|-|{\bf n}_1|)+\cdots
+(|\beta_{j}|-|{\bf n}_j|)}\nonumber\\
&\overset{\eqref{rg32}}{=}|\beta|-|e_{k+l}|+(l+j)\alpha-(|{\bf n}_1|+\cdots+|{\bf n}_j|)
\overset{\eqref{as08}}{=}|\beta|-|\gamma|.
\end{align*}

%\medskip

Finally, we give the ``three-point argument'' for the estimate \eqref{mt94}, 
for notational simplicity in case of the current multi-index $\beta$,
so that we now may use \eqref{cbp1} and \eqref{ks93}. 
By \eqref{cw01} and \eqref{ks93}, the left hand side of \eqref{cbp1} can be estimated 
as follows
\begin{equation*}
\mathbb{E}^\frac{1}{p}|(\Pi_y-\Gamma^*_{yx} P \Pi_x)_\beta(z) |^p 
\lesssim_{\beta,p} (|z-x| + |y-x|)^{|\beta|}.
\end{equation*}
By the equivalence of norms on the finite-dimensional space of
space-time polynomials of degree $<|\beta|$, which by a duality argument can
be upgraded to the following estimate of annealed norms for random polynomials
\begin{equation*}
\max_{\n:\,|\n|<|\beta|} |y-x|^{|\n|}\,\mathbb{E}^\frac{1}{p}|\pi_{yx\beta}^{({\bf n})}|^p 
\lesssim
\fint_{|z-x|\le|y-x|}dz\,
\mathbb{E}^\frac{1}{p} \big| \sum_{\n:\,|\n|<|\beta|}(z-x)^\n \pi_{yx\beta}^{({\bf n})}\big|^p,
\end{equation*}
we obtain \eqref{mt94}.
\end{proof}

% For the acknowledgements section, please do not number the section, please start it with \section*{Acknowledgements}
\medskip
\begin{center}
\sc Acknowledgements
\end{center}
\noindent
This lecture note is based on a course given at the Institute of Science and Technology Austria (ISTA) in July 2022.
The authors would like to express their gratitude to ISTA for its hospitality.	

% Please format your references as follows in your main tex file.
% Using BibTex is not recommended but can be handled.
% The list of references should follow alphabetical order of the authors' last (family) name. If there are more papers by the same author(s), these should be in chronological order.
% Abbreviations of names of journals should follow the Mathematical Reviews (see http://www.ams.org/msnhtml/serials.pdf). Page numbers should be written with an en dash, e.g. 15--23.

% Please make sure that you list only those entries that are cited in your paper.

\bigskip

\bigskip

\begin{flushleft}
\footnotesize \normalfont
\textsc{Felix Otto, Kihoon Seong, and Markus Tempelmayr\\
Max--Planck Institute for Mathematics in the Sciences\\ 
04103 Leipzig, Germany}\\
\texttt{\textbf{felix.otto@mis.mpg.de, kihoonseong@cornell.edu, markus.tempelmayr@uni-muenster.de}}
\end{flushleft}

\end{document}